\providecommand{\U}[1]{\protect\rule{.1in}{.1in}}
\providecommand{\U}[1]{\protect\rule{.1in}{.1in}}
\newtheorem{theorem}{Theorem}
\newtheorem{corollary}[theorem]{Corollary}
\newtheorem{definition}[theorem]{Definition}
\newtheorem{proposition}[theorem]{Proposition}
\newtheorem{remark}[theorem]{Remark}
\newenvironment{proof}[1][Proof]{\noindent\textbf{#1.} }{\ \rule{0.5em}{0.5em}}
\title{\bf Some applications of the chromatic polynomials}
\author{Mohammed Said Maamra and Miloud Mihoubi\\[1.5ex]
Faculty of Mathematics, RECITS Laboratory, DG-RSDT\\
USTHB, BP 32, El-Alia, 16111, Algeries, Algeria\\
\small\tt mmaamra@usthb.dz \ \ \ mmihoubi@usthb.dz \\
}
\begin{document}

\maketitle

\begin{abstract}
The chromatic polynomials are studied by several authors and have important
applications in different frameworks, specially, in graph theory and enumerative combinatorics.
The aim of this work is to establish some properties of the coefficients of the chromatic polynomial
of a graph. Three applications on restricted Stirling numbers of the second kind are given.

  \bigskip\noindent \textbf{Keywords:} Chromatic polynomial of a graph; Stirling numbers; log-concavity; P\'{o}lya-frequency sequences.\\
  \small Mathematics Subject Classifications: 05C15; 05C69; 11B73
\end{abstract}

\section{Introduction}
The chromatic polynomial was introduced by Birkhoff \cite{bir1} and studied
later by Whitney \cite{whi1,whi2}, Birkhoff and Lewis \cite{bir2}, Read \cite%
{rea} and several other authors.\ The chromatic polynomial of a graph can be
used as a tool to find the number of possible partitions of a finite set
under some particular restraints such that the Stirling numbers of the
second kind \cite{whit}.
For a given graph $G=\left( V,E\right) $ of order $n$ and $\lambda \in
\mathbb{N}
,$ a mapping $f:V\rightarrow \left \{ 1,2,\ldots ,\lambda \right \} $ is
called a $\lambda $-coloring of $G$ if $f(u)\neq f(v)$ whenever the vertices
$u$ \ and $v$ are adjacent in $G$. The $\lambda $-colorings $f$ and $g$ of $%
G $ are regarded as distinct if $f(x)\neq g(x)$ for some $x$ in $G.$ The
chromatic polynomial $P(G,\lambda )$ counts the number of (proper) $\lambda $%
-colorings of $G.$ For example, it is known that $P\left( O_{n},\lambda
\right) =\lambda ^{n},$ $P(K_{n},\lambda )=\left( \lambda \right) _{n}$ and $%
P\left( T_{n},\lambda \right) =\lambda \left( \lambda -1\right) ^{n-1},$ $%
n\geq 1,$ where $O_{n}$ is a graph of order $n$ and without edges, $K_{n}$
is the complete graph of order $n,$ $T_{n}$ is a tree of order $n$ and $%
\left( \lambda \right) _{n}=\lambda \left( \lambda -1\right) \cdots \left(
\lambda -n+1\right) $ if $n\geq 1$ and $\left( \lambda \right) _{0}=1.$ More
generally, the chromatic polynomial of $G$ can be written as $P(G,\lambda
)=\sum \limits_{i=\chi \left( G\right) }^{n}\alpha _{i}\left( G\right)
\left( \lambda \right) _{i},$ see \cite[Thm. 1.4.1]{don}, where $\alpha
_{i}\left( G\right) $ is the number of ways of partitioning $V$ into $i$
independent sets and $\chi \left( G\right) $ is the chromatic number. For
use later, recall that, if $G_{1}=\left( V_{1},E_{1}\right) $ and $%
G_{2}=\left( V_{2},E_{2}\right) $ are graphs on disjoint sets of vertices,
their union is defined by the graph $G_{1}\cup G_{2}=\left( V_{1}\cup
V_{2},E_{1}\cup E_{2}\right) $ and $P(G_{1}\cup G_{2},\lambda
)=P(G_{1},\lambda )P(G_{2},\lambda ),$ see for example \cite[Sec.\ 1.2]{don}.
In this paper, for a given graph $H,$ we present some properties for the
families of graphs $O_{n}\cup H,$ $K_{n}\cup H$ and $T_{n}\cup H.$ In the
next section, we give some recurrence relations for the coefficients $\alpha
_{k}\left( O_{n}\cup H\right) ,$ $\alpha _{k}\left( K_{n}\cup H\right) $ and
$\alpha _{k}\left( T_{n}\cup H\right) $ and some results on log-concavity and P\'{o}lya-frequency for sequences  related to these coefficients. In the three last sections, we present three applications on restricted Stirling numbers of the second kind.

\section{Recurrence relations and some consequences}
Let $H$ be any graph of $h$ vertices, $O_{n}$ be the graph of $n$ $\left(
\geq 1\right) $ vertices and no edges and let $O_{0}$ be the graph with no
vertices, $K_{n}$ be the complete graph of $n\  \left( \geq 1\right) $
vertices with $K_{0}$ be a graph with no vertices and $T_{n}$ be a tree of $%
n\  \left( \geq 1\right) $ vertices with $T_{0}$ be a graph with no vertices.
In this section, we give some recurrence relations for the coefficients $%
\alpha _{k}\left( O_{n}\cup H\right) ,$ $\alpha _{k}\left( K_{n}\cup
H\right) ,$ $\alpha _{k}\left( T_{n}\cup H\right) $ and some of their consequences.
\begin{theorem}
\label{TT1}Let $n,$ $s,$ $k$ be nonnegative integers with $\ 0\leq s\leq n.$
Then, $\alpha _{k}\left( O_{n}\cup H\right) =0$ if $k<\chi \left( H\right) $
or $k>n+h$ and for $\chi \left( H\right) \leq k\leq n+h$\ we have%
\begin{equation*}
\alpha _{k}\left( O_{n}\cup H\right) =\sum_{j=\chi \left( H\right)
}^{k}{s+j \brace k }_{\!\!j}\alpha _{j}\left( O_{n-s}\cup H\right) .
\end{equation*}%
In particular, for $s=n,$ we get%
\begin{equation*}
\alpha _{k}\left( O_{n}\cup H\right) =\sum_{j=\chi \left( H\right)
}^{k}{n+j \brace k }_{\!\!j}\alpha _{j}\left( H\right) ,
\end{equation*}%
and, for $s=1,$ we get%
\begin{equation*}
\alpha _{k}\left( O_{n}\cup H\right) =k\alpha _{k}\left( O_{n-1}\cup
H\right) +\alpha _{k-1}\left( O_{n-1}\cup H\right) ,\  \ n\geq 1,
\end{equation*}%
where the numbers ${n \brace k }_{\!\!r}$ are the $r$-Stirling numbers of
the second kind.
\end{theorem}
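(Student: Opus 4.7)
The plan is to exploit the multiplicative property $P(G_{1}\cup G_{2},\lambda )=P(G_{1},\lambda )P(G_{2},\lambda )$ recalled in the introduction. Writing $O_{n}=O_{s}\cup O_{n-s}$ (on disjoint vertex sets) and using $P(O_{s},\lambda )=\lambda ^{s}$ gives
\[
P(O_{n}\cup H,\lambda )=\lambda ^{s}\,P(O_{n-s}\cup H,\lambda ).
\]
Expanding both sides in the falling-factorial basis via $P(G,\lambda )=\sum_{i}\alpha _{i}(G)(\lambda )_{i}$ reduces the task to expressing $\lambda ^{s}(\lambda )_{j}$ back in that same basis and then matching coefficients.

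The key algebraic identity I would establish is
\[
\lambda ^{s}(\lambda )_{j}=\sum_{k}{s+j \brace k}_{\!\!j}(\lambda )_{k},
\]
which is the polynomial identity characterizing Broder's $r$-Stirling numbers of the second kind (with ${s+j \brace k}_{\!\!j}=0$ unless $j\le k\le s+j$). I would prove it in one of two equivalent ways: combinatorially, by counting mappings from an $(s+j)$-element set to $\{1,\ldots ,\lambda \}$ in which $j$ designated elements receive pairwise distinct images, sorting such mappings according to the size $k$ of the image and the partition of the domain it induces; or inductively on $s$, using the defining recurrence ${n+1 \brace k}_{\!\!j}=k{n \brace k}_{\!\!j}+{n \brace k-1}_{\!\!j}$ together with the falling-factorial identity $\lambda (\lambda )_{k}=(\lambda )_{k+1}+k(\lambda )_{k}$. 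This identity is the main (but standard) obstacle; once available the rest is bookkeeping.

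Substituting the identity into the factored expansion and invoking linear independence of the polynomials $(\lambda )_{k}$ yields
\[
\alpha _{k}(O_{n}\cup H)=\sum _{j}{s+j \brace k}_{\!\!j}\alpha _{j}(O_{n-s}\cup H).
\]
The summation range collapses to $\chi (H)\le j\le k$ because $\alpha _{j}(O_{n-s}\cup H)=0$ for $j<\chi (O_{n-s}\cup H)=\chi (H)$ (isolated vertices do not raise the chromatic number) and ${s+j \brace k}_{\!\!j}=0$ for $j>k$. The boundary assertions $\alpha _{k}(O_{n}\cup H)=0$ for $k<\chi (H)$ or $k>n+h$ are just the standard bounds $\chi (G)\le k\le |V(G)|$ applied to $G=O_{n}\cup H$.

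For the two special cases, $s=n$ is immediate since $\alpha _{j}(O_{0}\cup H)=\alpha _{j}(H)$. For $s=1$, the only nonzero values of ${1+j \brace k}_{\!\!j}$ are ${1+j \brace j}_{\!\!j}=j$ (element $j+1$ joins one of the $j$ distinguished blocks) and ${1+j \brace j+1}_{\!\!j}=1$ (element $j+1$ forms its own block), so the general formula reduces to $\alpha _{k}(O_{n}\cup H)=k\,\alpha _{k}(O_{n-1}\cup H)+\alpha _{k-1}(O_{n-1}\cup H)$, as required.
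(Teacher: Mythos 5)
Your proposal is correct and follows essentially the same route as the paper: factor $P(O_{n}\cup H,\lambda)=\lambda^{s}P(O_{n-s}\cup H,\lambda)$, expand $\lambda^{s}(\lambda)_{j}$ in the falling-factorial basis via the $r$-Stirling identity $\lambda^{s}(\lambda)_{j}=\sum_{k=j}^{s+j}{s+j \brace k}_{\!j}(\lambda)_{k}$, and compare coefficients. The only cosmetic difference is that the paper imports this identity directly from Broder's $(z+j)^{s}=\sum_{k}{s+j\brace k+j}_{\!j}(z)_{k}$ rather than reproving it, but the argument is otherwise identical, including the treatment of the vanishing range and the $s=n$ and $s=1$ specializations.
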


\begin{proof}
From \cite[Sec. 1.2]{don} we have $P\left( O_{n}\cup H,\lambda \right)
=\lambda ^{s}P\left( O_{n-s}\cup H,\lambda \right) ,\  \  \ 0\leq s\leq n.$%
\newline
Then, since the graph $O_{n}\cup H$ is of order $n+h$ and%
\begin{equation*}
\chi \left( O_{n}\cup H\right) =\max \left( \chi \left( O_{n}\right) ,\chi
\left( H\right) \right) =\max \left( 1,\chi \left( H\right) \right) =\chi
\left( H\right) ,
\end{equation*}%
we may state that $\alpha _{k}\left( O_{n}\cup H\right) =0$ if $k<\chi
\left( H\right) $ or $k>n+h,$ and otherwise, we have%
\begin{equation*}
\underset{k=\chi \left( H\right) }{\overset{n+h}{\sum }}\alpha _{k}\left(
O_{n}\cup H\right) \left( \lambda \right) _{k}=\underset{j=\chi \left(
H\right) }{\overset{n+h}{\sum }}\alpha _{j}\left( O_{n-s}\cup H\right)
\lambda ^{s}\left( \lambda \right) _{j}.
\end{equation*}%
On using the known identity $\left( z+j\right) ^{s}=\sum_{k=0}^{s}{n+j \brace k+j }_{\!\!j}\left( z\right) _{k},$
see \cite{bro}, we can write%
\begin{equation*}
\lambda ^{s}\left( \lambda \right) _{j}=\left( \lambda \right) _{j}\left(
\lambda -j+j\right) ^{s}=\overset{s}{\underset{k=0}{\sum }}{n+j \brace k+j }_{\!\!j}
\left( \lambda \right) _{j}\left( \lambda -j\right) _{k}
\end{equation*}%
and since $\left( \lambda \right) _{j}\left( \lambda -j\right) _{k}=\left(
\lambda \right) _{k+j}$ we get $\lambda ^{s}\left( \lambda \right) _{j}=%
\overset{s}{\underset{k=0}{\sum }}{n+j \brace k+j }_{\!\!j}\left( \lambda
\right) _{k+j}=\overset{s+j}{\underset{k=j}{\sum }}{s+j \brace k }_{\!\!j}\left( \lambda \right) _{k}.$ Then%
\begin{eqnarray*}
\underset{k=\chi \left( H\right) }{\overset{n+h}{\sum }}\alpha _{k}\left(
O_{n}\cup H\right) \left( \lambda \right) _{k} &=&\underset{j=\chi \left(
H\right) }{\overset{n+h}{\sum }}\alpha _{j}\left( O_{n-s}\cup H\right)
\overset{s+j}{\underset{k=j}{\sum }}{s+j \brace k }_{\!\!j}\left( \lambda
\right) _{k} \\
&=&\overset{s+j}{\underset{k=j}{\sum }}\left( \lambda \right)
_{k}\sum_{j=\chi \left( H\right) }^{k}\alpha _{j}\left( O_{n-s}\cup H\right)
{s+j \brace k }_{\!\!j}.
\end{eqnarray*}%
So, we get $\alpha _{k}\left( O_{n}\cup H\right) =\sum_{j=\chi \left(
H\right) }^{k}\alpha _{j}\left( O_{n-s}\cup H\right) {s+j \brace k }_{\!\!j}.$
\end{proof}

\begin{theorem}
\label{TT2}Let $n,$ $s,$ $k$ be nonnegative integers with$\ 0\leq s\leq n.$
Then, $\alpha _{k}\left( K_{n}\cup H\right) =0$ if $k<\max \left( n,\chi
\left( H\right) \right) $ or $k>n+h$ and for $\max \left( n,\chi \left(
H\right) \right) \leq k\leq n+h$\ we have%
\begin{equation*}
\alpha _{k}\left( K_{n}\cup H\right) =\underset{j=\max \left( n-s,\chi \left(
H\right) \right) }{\overset{k}{\sum }}\binom{s}{k-j}\left( j+s-n\right)
_{s+j-k}\alpha _{j}\left( K_{n-s}\cup H\right) .
\end{equation*}%
In particular, for $s=n,$ we get%
\begin{equation*}
\alpha _{k}\left( K_{n}\cup H\right) =\underset{j=\chi \left(
H\right)  }{\overset{k}{\sum }}\binom{n}{k-j}\left( j\right)
_{n+j-k}\alpha _{j}\left( H\right) ,
\end{equation*}%
and, for $s=1,$ we get%
\begin{equation*}
\alpha _{k}\left( K_{n}\cup H\right) =\left( k-n+1\right) \alpha _{k}\left(
K_{n-1}\cup H\right) +\alpha _{k-1}\left( K_{n-1}\cup H\right) ,\  \ n\geq 1.
\end{equation*}
\end{theorem}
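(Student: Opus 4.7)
The plan is to follow the same recipe as in Theorem~\ref{TT1}, replacing the role of $O_n$ by $K_n$. The starting point is the factorization of the chromatic polynomial. Since $P(K_n,\lambda)=(\lambda)_n$ and chromatic polynomials are multiplicative over disjoint unions,
\[
P(K_n\cup H,\lambda)=(\lambda)_n\,P(H,\lambda)=\frac{(\lambda)_n}{(\lambda)_{n-s}}\,P(K_{n-s}\cup H,\lambda)=(\lambda-n+s)_s\,P(K_{n-s}\cup H,\lambda)
\]
for all $0\le s\le n$. Combined with $\chi(K_n\cup H)=\max(n,\chi(H))$ and the fact that $K_n\cup H$ has $n+h$ vertices, this gives the vanishing $\alpha_k(K_n\cup H)=0$ unless $\max(n,\chi(H))\le k\le n+h$.

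The analytic step is to expand $(\lambda-n+s)_s(\lambda)_j$ in the basis of falling factorials $(\lambda)_k$. Writing $\lambda-n+s=(\lambda-j)+(j+s-n)$ and using the Chu--Vandermonde convolution for falling factorials,
\[
(x+y)_s=\sum_{i=0}^{s}\binom{s}{i}(x)_i(y)_{s-i},
\]
one obtains, after the telescoping identity $(\lambda)_j(\lambda-j)_i=(\lambda)_{j+i}$ and the reindexing $k=j+i$,
\[
(\lambda-n+s)_s(\lambda)_j=\sum_{k}\binom{s}{k-j}(j+s-n)_{s+j-k}(\lambda)_k.
\]
Plugging this into $P(K_n\cup H,\lambda)=(\lambda-n+s)_s\sum_j\alpha_j(K_{n-s}\cup H)(\lambda)_j$, swapping the order of summation, and reading off the coefficient of $(\lambda)_k$ then yields the announced recurrence; the lower limit $j\ge\max(n-s,\chi(H))$ is forced by the vanishing applied to the graph $K_{n-s}\cup H$, while values $k<n$ are automatically killed by a zero factor inside $(j+s-n)_{s+j-k}$.

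For the two specializations I would simply substitute. Setting $s=n$ collapses $K_{n-s}\cup H$ to $H$, gives the factor $\binom{n}{k-j}(j)_{n+j-k}$, and reproduces the claimed formula. Setting $s=1$ truncates $\binom{1}{k-j}$ to $k-j\in\{0,1\}$, so that only $j=k$ (contributing $(k-n+1)\alpha_k(K_{n-1}\cup H)$) and $j=k-1$ (contributing $\alpha_{k-1}(K_{n-1}\cup H)$) survive. The main obstacle is really just picking the right additive splitting $\lambda-n+s=(\lambda-j)+(j+s-n)$: this is what makes Chu--Vandermonde land directly on the falling-factorial basis and lets the chromatic-number bound slot cleanly into the index range; everything after that is bookkeeping.
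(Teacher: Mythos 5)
Your proposal is correct and follows essentially the same route as the paper: the factorization $P(K_n\cup H,\lambda)=(\lambda-n+s)_s\,P(K_{n-s}\cup H,\lambda)$, the splitting $\lambda-n+s=(\lambda-j)+(j+s-n)$ combined with the Vandermonde convolution for falling factorials (what the paper calls the binomial-type property of $((\lambda)_n)$), the telescoping $(\lambda)_j(\lambda-j)_i=(\lambda)_{j+i}$, and coefficient identification in the basis $\{(\lambda)_k\}$. The specializations $s=n$ and $s=1$ are handled by the same direct substitution, so there is nothing to add.
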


\begin{proof}
From \cite[Sec. 1.2]{don} we have $P\left( K_{n}\cup H,\lambda \right)
=\left( \lambda -n+1\right) P\left( K_{n-1}\cup H,\lambda \right) .$ Hence
\begin{equation*}
P\left( K_{n}\cup H,\lambda \right) =\left( \lambda -n+s\right) _{s}P\left(
K_{n-s}\cup H,\lambda \right) ,\  \ 0\leq s\leq n.
\end{equation*}%
Then, since the graph $K_{n}\cup H$ is of order $n+h$ and%
\begin{equation*}
\chi \left( K_{n}\cup H\right) =\max \left( \chi \left( K_{n}\right) ,\chi
\left( H\right) \right) =\max \left( n,\chi \left( H\right) \right) ,
\end{equation*}%
we state that $\alpha _{k}\left( K_{n}\cup H\right) =0$ if $k<\max \left(
n,\chi \left( H\right) \right) $ or $k>n+h,$ and otherwise, we have%
\begin{equation*}
\underset{k=\max \left( n,\chi \left( H\right) \right) }{\overset{n+h}{\sum }%
}\alpha _{k}\left( K_{n}\cup H\right) \left( \lambda \right) _{k}=\underset{%
j=\max \left( n-s,\chi \left( H\right) \right) }{\overset{n-s+h}{\sum }}\alpha
_{j}\left( K_{n-s}\cup H\right) \left( \lambda -n+s\right) _{s}\left(
\lambda \right) _{j}.
\end{equation*}%
By the fact that the sequence of polynomials $((\lambda)_n, n\geq0)$ is of binomial type, the expression $\left( \lambda -n+s\right) _{s}\left(\lambda \right) _{j}$ can be written as
\begin{eqnarray*}
\left( \lambda -n+s\right) _{s}\left( \lambda \right) _{j} &=&\left( \lambda
-j+j+s-n\right) _{s}\left( \lambda \right) _{j} \\
&=&\underset{k=0}{\overset{s}{\sum }}\binom{s}{k}\left( j+s-n\right)
_{s-k}\left( \lambda -j\right) _{k}\left( \lambda \right) _{j} \\
&=&\underset{k=0}{\overset{s}{\sum }}\binom{s}{k}\left( j+s-n\right)
_{s-k}\left( \lambda \right) _{k+j} \\
&=&\underset{k=j}{\overset{s+j}{\sum }}\binom{s}{k-j}\left( j+s-n\right)
_{s+j-k}\left( \lambda \right) _{k}.
\end{eqnarray*}%
So, it results that
\begin{eqnarray*}
&&\underset{k=\max \left( n,\chi \left( H\right) \right) }{\overset{n+h}{%
\sum }}\alpha _{k}\left( K_{n}\cup H\right) \left( \lambda \right) _{k} \\
&=&\underset{j=\max \left( n-s,\chi \left( H\right) \right) }{\overset{n-s+h}{%
\sum }}\alpha _{j}\left( K_{n-s}\cup H\right) \underset{k=j}{\overset{s+j}{%
\sum }}\binom{s}{k-j}\left( j+s-n\right) _{s+j-k}\left( \lambda \right) _{k}
\\
&=&\underset{k=\max \left( n,\chi \left( H\right) \right) }{\overset{n+h}{%
\sum }}\left( \lambda \right) _{k}\underset{j=\max \left( n-s,\chi \left(
H\right) \right) }{\overset{k}{\sum }}\binom{s}{k-j}\left( j+s-n\right)
_{s+j-k}\alpha _{j}\left( K_{n-s}\cup H\right).
\end{eqnarray*}%
The identification of the coefficients of $\left( \lambda \right) _{k}$ complete the proof.
\end{proof}

\begin{theorem}
\label{TT3}Let $n,$ $s,$ $k$ be nonnegative integers with$\ 0\leq s\leq n.$
Then
\begin{equation*}
\alpha _{k}\left( T_{n}\cup H\right) =0\text{ \ for \ }k<\max \left(
2-\delta _{\left( n=1\right) }-2\delta _{\left( n=0\right) },\chi \left(
H\right) \right) \text{ \ or \ }k>n+h,
\end{equation*}%
and, for $\max \left( 2-\delta _{\left( n=1\right) }-2\delta _{\left(
n=0\right) },\chi \left( H\right) \right) \leq k\leq n+h$\ we have%
\begin{equation*}
\alpha _{k}\left( T_{n}\cup H\right) =\underset{j=\max \left( k-s,0\right) }{%
\overset{k}{\sum }}{s+j-1 \brace k-1 }_{\!\!j-1}\alpha _{j}\left(
T_{n-s}\cup H\right) .
\end{equation*}%
In particular, for $s=n,$ we get%
\begin{equation*}
\alpha _{k}\left( T_{n}\cup H\right) =\underset{j=1}{\overset{k}{\sum }}%
{s+j-1 \brace k-1 }_{\!\!j-1}\alpha _{j}\left( H\right)
\end{equation*}%
and, for $s=1,$ we get%
\begin{equation*}
\alpha _{k}\left( T_{n}\cup H\right) =\left(k-1 \right)\alpha _{k}\left( T_{n-1}\cup
H\right) +\alpha _{k-1}\left( T_{n-1}\cup H\right) ,\  \ n\geq 1.
\end{equation*}
\end{theorem}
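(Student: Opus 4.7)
My plan is to mirror the arguments used in Theorems \ref{TT1} and \ref{TT2}. The starting identity is
$$P(T_n \cup H, \lambda) = (\lambda-1)^{s}\, P(T_{n-s} \cup H, \lambda),$$
valid whenever $n - s \geq 1$, since $P(T_m,\lambda) = \lambda(\lambda-1)^{m-1}$ for $m \geq 1$ and the chromatic polynomial is multiplicative over disjoint unions. The claimed vanishing range of $k$ follows immediately from $\chi(T_n \cup H) = \max(\chi(T_n),\chi(H))$, using $\chi(T_n) = 2$ for $n \geq 2$, $\chi(T_1) = 1$, $\chi(T_0) = 0$, together with $|V(T_n \cup H)| = n+h$; these three cases are exactly what the telescoping expression $2 - \delta_{(n=1)} - 2\delta_{(n=0)}$ encodes.

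The technical core is to expand $(\lambda-1)^{s}(\lambda)_j$ in the falling-factorial basis. Writing $(\lambda)_j = \lambda(\lambda-1)_{j-1}$ for $j \geq 1$ and putting $\mu = \lambda - 1$ reduces the task to expanding $\mu^{s}(\mu)_{j-1}$ in the basis $\{(\mu)_m\}$. Applying the Broder identity from the proof of Theorem \ref{TT1} with $z = \mu-(j-1)$ and shift parameter $j-1$, then using the absorption $(\mu)_{j-1}(\mu-(j-1))_k = (\mu)_{k+j-1}$ and finally multiplying by $\lambda$ (which promotes $(\mu)_m$ to $(\lambda)_{m+1}$), I arrive at
$$(\lambda-1)^{s}(\lambda)_j = \sum_{k=j}^{s+j} {s+j-1 \brace k-1}_{\!\!j-1}(\lambda)_k.$$
Substituting the two falling-factorial expansions into the initial identity, swapping the order of summation and matching coefficients of $(\lambda)_k$ produces the general recurrence with inner summation range $\max(k-s,0) \leq j \leq k$. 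The two announced specializations follow by direct substitution: for $s = n$ one replaces $T_{n-s}\cup H$ by $H$; for $s = 1$ the inner sum collapses to the two indices $j \in \{k-1,k\}$ after noting the elementary identities ${k-1 \brace k-1}_{\!\!k-2} = 1$ and ${k \brace k-1}_{\!\!k-1} = k-1$ (the latter counting the $k-1$ ways to place element $k$ into one of the $k-1$ singleton blocks formed by $1,\ldots,k-1$).

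I expect the main delicate point to be purely bookkeeping: tracking lower and upper summation limits after the swap, and confirming the small-parameter $r$-Stirling evaluations responsible for the clean $s = 1$ collapse. No conceptual obstacle is anticipated, since the identity $(\lambda-1)^s (\lambda)_j$ differs from the $\lambda^s (\lambda)_j$ used in Theorem \ref{TT1} only by the shift $\lambda \mapsto \lambda-1$, for which the $r$-Stirling apparatus is tailor-made.
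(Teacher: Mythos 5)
Your proposal is correct and takes essentially the same route as the paper: both start from $P\left( T_{n}\cup H,\lambda \right) =\left( \lambda -1\right) ^{s}P\left( T_{n-s}\cup H,\lambda \right)$, reduce to the expansion $\left( \lambda -1\right) ^{s}\left( \lambda \right) _{j}=\sum_{k=j}^{s+j}{s+j-1 \brace k-1}_{\!\!j-1}\left( \lambda \right) _{k}$, and then swap summations and identify coefficients of $\left( \lambda \right) _{k}$. Your explicit derivation of that expansion (writing $\left( \lambda \right) _{j}=\lambda \left( \lambda -1\right) _{j-1}$, substituting $\mu =\lambda -1$, and invoking the Broder identity with shift $j-1$) simply spells out the step the paper labels ``similarly to the proof of Theorem~\ref{TT1}'', and your $r$-Stirling evaluations yielding the $s=1$ collapse are correct.
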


\begin{proof}
The graph $T_{n}\cup H$ is of order $n+h$ and we have%
\begin{equation*}
\chi \left( T_{n}\cup H\right) =\max \left( \chi \left( T_{n}\right) ,\chi
\left( H\right) \right) =\max \left( 2-\delta _{\left( n=1\right) }-2\delta
_{\left( n=0\right) },\chi \left( H\right) \right) ,
\end{equation*}%
we conclude that $\alpha _{k}\left( T_{n}\cup H\right) =0$ if $k<\max \left(
2-\delta _{\left( n=1\right) }-2\delta _{\left( n=0\right) },\chi \left(
H\right) \right) $ or $k>n+h,$ and otherwise, we have%
\begin{equation*}
P\left( T_{n}\cup H,\lambda \right) =P\left( T_{n},\lambda \right) P\left(
H,\lambda \right) =\lambda \left( \lambda -1\right) ^{n-1}P\left( H,\lambda
\right) ,
\end{equation*}%
which gives $P\left( T_{n}\cup H,\lambda \right) =\left( \lambda -1\right)
^{s}P\left( T_{n-s}\cup H,\lambda \right) ,\  \ 0\leq s\leq n.$ \newline
This relation is equivalent to%
\begin{equation*}
\underset{k=0}{\overset{n+h}{\sum }}\alpha _{k}\left( T_{n}\cup H\right)
\left( \lambda \right) _{k}=\underset{j=0}{\overset{n+h-s}{\sum }}\alpha
_{j}\left( T_{n-s}\cup H\right) \left( \lambda -1\right) ^{s}\left( \lambda
\right) _{j}.
\end{equation*}%
Similarly to $\lambda ^{s}\left( \lambda \right) _{j}$ (see proof of
Theorem \ref{TT1}), we get
\begin{equation*}
\left( \lambda -1\right) ^{s}\left( \lambda \right) _{j}=\underset{k=j}{%
\overset{s+j}{\sum }}{s+j-1 \brace k-1 }_{\!\!j-1}\left( \lambda \right)
_{k}.
\end{equation*}
Then, the last equality becomes
\begin{eqnarray*}
\underset{k=0}{\overset{n+h}{\sum }}\alpha _{k}\left( T_{n}\cup H\right)
\left( \lambda \right) _{k} &=&\underset{j=0}{\overset{n+h-s}{\sum }}\alpha
_{j}\left( T_{n-s}\cup H\right) \underset{k=j}{\overset{s+j}{\sum }}{s+j-1 \brace k-1 }_{\!\!j-1}\left( \lambda \right) _{k} \\
&=&\underset{k=0}{\overset{n+h}{\sum }}\left( \lambda \right) _{k}\underset{%
j=\max \left( k-s,0\right) }{\overset{k}{\sum }}{s+j-1 \brace k-1 }_{\!\!j-1}\alpha _{j}\left( T_{n-s}\cup H\right) ,
\end{eqnarray*}%
which gives thus the result.
\end{proof}

\noindent To give the following proposition, let us recall some definitions
and results on log-concavity, P\'{o}lya-frequency and $q$-log-convexity.
Indeed, let $u_{0},$ $u_{1},$ $u_{2},\ldots ,$ be a sequence of nonnegative
real numbers. It is log-concave (LC) if $u_{i-1}u_{i+1}\leq u_{i}^{2}$ for
all $i>0,$ and, it is called a P\'{o}lya-frequency sequence (or a PF
sequence) if all minors of the matrix $A=(u_{i-j})_{i,j\geq 0}$ have
nonnegative determinants (where $u_{k}=0$ if $k<0$), for more information
see \cite{kar}. A sequence of real polynomials $\left( P_{n}(q),n\geq
0\right) $ is called $q$-log-convex if the polynomial $%
P_{n}(q)^{2}-P_{n-1}(q)P_{n+1}(q)$ has nonnegative coefficients for all $%
n\geq 1,$ see \cite{sag1,sag2,zho}. Some known results on such sequences are
given as follows. \newline
Let $\left( T\left( n,k\right) ,\ n,k\geq 0\right) $ be sequence of
nonnegative numbers satisfying the recurrence
\begin{equation*}
T\left( n,k\right) =\left( a_{1}n+a_{2}k+a_{3}\right) T\left( n-1,k\right)
+\left( b_{1}n+b_{2}k+b_{3}\right) T\left( n-1,k-1\right), \ n\geq k\geq 1,
\end{equation*}%
with $T\left( n,k\right) =0$ unless $0\leq k\leq n,$ $T\left(0,0\right)>0,$ \ $a_{1}\geq
0,$ $a_{1}+a_{2}\geq 0,\ a_{1}+a_{3}\geq 0$ and $b_{1}\geq 0,$ $%
b_{1}+b_{2}\geq 0,$ $b_{1}+b_{2}+b_{3}\geq 0.$ It is shown in \cite[Thm. 2]%
{kur} that, for each fixed $n,$ the sequence $\left( T\left( n,k\right) ,\
0\leq k\leq n\right) $ is log-concave. If further we have $a_{2}b_{1}\geq
a_{1}b_{2}$ \ and \ $a_{2}\left( b_{1}+b_{2}+b_{3}\right) \geq \left(
a_{1}+a_{3}\right) b_{2},$ this sequence is P\'{o}lya frequency sequence
\cite[Cor. 3]{wan} and further, by setting $T_{n}\left( q\right) =\overset{n}%
{\underset{k=0}{\sum }}T\left( n,k\right) q^{k},$ if
\begin{equation*}
\left( a_{2}b_{1}-a_{1}b_{2}\right) n+a_{2}b_{2}k+a_{2}b_{3}-a_{3}b_{2}\geq 0%
\text{ for }0<k\leq n,
\end{equation*}%
then, the sequence of polynomials $\left( T_{n}\left( q\right) ,\ n\geq
0\right) $ is $q$-log-convex \cite[Thm. 4.1]{liu}.

\begin{proposition}
\label{P1}Let $\left( U\left( n,k\right) ,\ n,k\geq 0\right) ,$  $\left( V\left( n,k\right) ,\ n,k\geq 0\right) ,$ and $\left( W\left( n,k\right) ,\ n,k\geq 0\right) ,$ be sequences of nonnegative numbers with
$U\left( n,k\right)= V\left( n,k\right)= W\left( n,k\right)=0$ when  $ k> n$ and for $0\leq k\leq n,$ these sequences are defined by
\begin{eqnarray*}
U\left( n,k\right) & =&\alpha _{k+h}\left( O_{n}\cup H\right) ,\\
V\left( n,k\right)  &=&\alpha _{k+h}\left( T_{n}\cup H\right) ,\\
W\left( n,k\right) & =&\alpha _{k+h}\left( K_{n}\cup H\right)
\end{eqnarray*}%
and let
\begin{eqnarray*}
U_{n}\left( q\right) &=&\overset{n}{\underset{k=0}{\sum }}U\left( n,k\right) q^{k}, \\
V_{n}\left( q\right) &=&\overset{n}{\underset{k=0}{\sum }}V\left( n,k\right) q^{k}.
\end{eqnarray*}%
Then, the sequences $\left( U\left( n,k\right) ,\ 0\leq k\leq n\right) $ and $%
\left( V\left( n,k\right) ,\ 0\leq k\leq n\right) $ are log-concave and P%
\'{o}lya frequency sequences, the sequences of real polynomials $\left(
U_{n}(q),n\geq 0\right) $ and $\left( V_{n}(q),n\geq 0\right) $ are $q$%
-log-convex sequences and the sequence $\left( W\left( n,k\right) ,\ 0\leq
k\leq n\right) $ is a P\'{o}lya frequency sequence.
\end{proposition}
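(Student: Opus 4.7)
My plan is to specialise Theorems \ref{TT1}, \ref{TT2}, \ref{TT3} to $s=1$, which puts each of $U$, $V$, $W$ into a two-term recurrence of the exact shape $T(n,k)=(a_1 n+a_2 k+a_3)T(n-1,k)+(b_1 n+b_2 k+b_3)T(n-1,k-1)$ used in the Kurtz--Wang--Liu framework recalled just before the proposition. The work then reduces to reading off the six parameters $(a_1,a_2,a_3;b_1,b_2,b_3)$ and verifying the listed sign conditions.

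For $U(n,k)=\alpha_{k+h}(O_n\cup H)$, Theorem \ref{TT1} at $s=1$ gives $U(n,k)=(k+h)U(n-1,k)+U(n-1,k-1)$, so $(a_i;b_i)=(0,1,h;0,0,1)$. Every Kurtz sign condition collapses to $0\ge 0$, $1\ge 0$, or $h\ge 0$; Wang's extra conditions $a_2b_1\ge a_1b_2$ and $a_2(b_1+b_2+b_3)\ge (a_1+a_3)b_2$ collapse to $0\ge 0$ and $1\ge 0$; Liu's $q$-log-convexity condition $(a_2b_1-a_1b_2)n+a_2b_2k+a_2b_3-a_3b_2\ge 0$ collapses to $1\ge 0$. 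So log-concavity and PF for the rows $(U(n,k))_k$, and $q$-log-convexity of $(U_n(q))_n$, follow at once. The same bookkeeping with $(0,1,h-1;0,0,1)$ coming from Theorem \ref{TT3} at $s=1$ handles $V$; the only non-automatic inequality is $a_1+a_3=h-1\ge 0$, which holds under the standing assumption $h\ge 1$.

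The main obstacle is $W$. Theorem \ref{TT2} at $s=1$ yields $W(n,k)=(k+h-n+1)W(n-1,k)+W(n-1,k-1)$, so the parameters are $(-1,1,h+1;0,0,1)$ and Kurtz's hypothesis $a_1\ge 0$ fails -- this is exactly why the proposition claims only PF for $W$. My plan is to prove PF by induction on $n$ working with the generating polynomial $W_n(x)=\sum_k W(n,k)x^k$, which satisfies the differential-type identity $W_n(x)=(x+h+1-n)W_{n-1}(x)+xW'_{n-1}(x)$. The base case $W_0(x)=\alpha_h(H)$ is a non-negative constant, hence trivially real-rooted. For the inductive step I use the classical fact that the operator $p(x)\mapsto xp'(x)+(x+c)p(x)$ preserves real-rootedness (a special case of Hermite--Poulain / Laguerre--P\'olya theory); the cleanest packaging is the identity $xp'(x)+(x+c)p(x)=x^{1-c}e^{-x}\frac{d}{dx}\bigl[x^c e^x p(x)\bigr]$ together with Rolle's theorem. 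The delicate sub-case $c=h+1-n<0$, arising when $n>h+1$, is rescued by observing that in precisely that regime $W_{n-1}(x)$ has $x=0$ as a root of order $(n-1)-h=-c$, so $x^c W_{n-1}(x)$ is still a polynomial and the Rolle count goes through unchanged. Since $W_n(x)$ has non-negative coefficients its real roots are automatically non-positive, which is the PF condition. I expect this last real-rootedness step for $W$ to be the only point where a genuinely separate argument, beyond the Kurtz--Wang--Liu template used for $U$ and $V$, is needed.
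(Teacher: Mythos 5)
Your proposal is correct, and for $U$ and $V$ it coincides with the paper's proof: the paper likewise specializes Theorems \ref{TT1}, \ref{TT2}, \ref{TT3} at $s=1$ to obtain exactly the three recurrences you write down (after the shift $k\mapsto k+h$), checks the base value $U(0,0)=V(0,0)=W(0,0)=\alpha_h(H)=1$, and then invokes Kurtz, Wang--Yeh and Liu--Wang. Where you genuinely diverge is the treatment of $W$. The paper disposes of $W$ with the same one-line citation of Wang--Yeh's Corollary 3, even though the recurrence $W(n,k)=(k-n+h+1)W(n-1,k)+W(n-1,k-1)$ has $a_1=-1$, so the hypothesis $a_1\ge 0$ of the Kurtz-type framework as recalled just before the proposition fails; whether the citation is legitimate then hinges on the exact hypotheses of Wang--Yeh's result rather than on anything verified in the paper. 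You instead notice this failure and replace the citation by a self-contained induction: $W_n(x)=(x+h+1-n)W_{n-1}(x)+xW_{n-1}'(x)$, real-rootedness preserved via the factorization $x^{1-c}e^{-x}\frac{d}{dx}\bigl[x^{c}e^{x}p(x)\bigr]$ and Rolle, with the correct observation that for $n>h+1$ the needed zero of order $n-1-h$ at the origin is supplied by the vanishing $\alpha_j(K_{n-1}\cup H)=0$ for $j<n-1$; non-negative coefficients plus real roots then give PF by Aissen--Schoenberg--Whitney. This buys a proof of the $W$ statement that does not depend on the fine print of the cited corollary, at the cost of a page of classical real-rootedness bookkeeping. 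Your remark that the $V$ case needs $h\ge 1$ (so that $a_1+a_3=h-1\ge 0$) is also a point the paper passes over silently; it is harmless since $H$ nonempty is the intended setting, but worth stating.
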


\begin{proof}
We have $U\left(0,0\right) =V\left(0,0\right) = W\left(0,0\right) =\alpha _{h}\left(H\right)=1 $ and for $n\geq 1$, Theorems \ref{TT1}, \ref{TT2} and \ref{TT3} imply
\begin{eqnarray*}
U\left(n,k\right) &=&U\left( n-1,k-1\right) +\left( k+h\right) U\left( n-1,k\right) ,\\
V\left(n,k\right) &=&V\left( n-1,k-1\right) +\left( k+h-1\right) V\left( n-1,k\right) , \\
W\left(n,k\right) &=&W\left( n-1,k-1\right) +\left( k-n+h+1\right) W\left(n-1,k\right) .
\end{eqnarray*}%
So, the log-concavity follows from \cite[Thm. 2]{kur}, P\'{o}lya frequency
follows from \cite[Cor. 3]{wan} and $q$-log-convexity follows from \cite[%
Thm. 4.1]{liu}.
\end{proof}

\section{Application to the graph $K_{r_{_{1}}}\cup \cdots \cup K_{r_{_{p}}}\cup O_{n}$}

Let $p\geq 1$ be an integer. We consider the graph $G_{n,\mathbf{r}%
_{p}}=K_{r_{_{1}}}\cup \cdots \cup K_{r_{_{p}}}\cup O_{n}$ of order $%
n+r_{1}+\cdots +r_{p}$ and chromatic number
\begin{equation*}
\chi \left( G_{n,\mathbf{r}_{p}}\right) =\max \left( \chi \left(
K_{r_{_{1}}}\right) ,\ldots ,\chi \left( K_{r_{_{p}}}\right) ,\chi \left(
O_{n}\right) \right) =\max \left( r_{1},\ldots ,r_{p},1\right) .
\end{equation*}%
First of all, recall the definition of the $\left( r_{1},\ldots
,r_{p}\right) $-Stirling number of the second kind introduced by Mihoubi et al. in \cite{mih,maa} .

\begin{definition}
Let $R_{1},\ldots ,R_{p}$ be subsets of the set $\left[ n\right] $ with $%
\left \vert R_{i}\right \vert =r_{i}$ and $R_{i}\cap R_{j}=\varnothing $ for
all $i,j=1,\ldots ,p,$ $i\neq j.$ The $\left( r_{1},\ldots ,r_{p}\right) $%
-Stirling number of the second kind, $p\geq 1,$ denoted by ${ n \brace k }_{\!\!r_{1},\ldots,r_{p}},$
counts the number of partitions of the set $%
\left[ n\right] :=\left \{ 1,2,\ldots ,n\right \} $ into $k$ non-empty
subsets such that the elements of each of the $p$ sets $R_{1},\ldots ,R_{p}$
are in distinct subsets.
\end{definition}

\noindent From this definition, one can see easily that these numbers satisfy%
\begin{align*}
{ n \brace k }_{\!\!r_{1},\ldots,r_{p}}& =0,\text{ \  \  \ }n<r_{1}+\cdots
+r_{p}\text{ or }k<\max \left( r_{1},\ldots ,r_{p},1\right) , \\
{ n \brace k }_{\!\!r_{1},\ldots,r_{p}}& ={n\brace k}_{\!\!r_{p}}%
\text{ if }r_{1},\ldots ,r_{p-1}\in \left \{ 0,1\right \} , \\
{ n \brace k }_{\!\!r_{1},\ldots,r_{p}}& ={ n \brace k }_{\!\!r_{\sigma
\left( 1\right) },\ldots ,r_{\sigma \left( p\right) }}\text{ for all
permutations }\sigma \text{ on the set }\left \{ 1,\ldots ,p\right \} .
\end{align*}%
Furthermore, for $r_{1}\leq r_{2}\leq \cdots \leq r_{p},$ the coefficient $%
\alpha _{k}\left( G_{n,\mathbf{r}_{p}}\right) $ represents the number of
ways of partitioning the set of $%
n+\left \vert \mathbf{r}_{p}\right \vert $ vertices of $G_{n,\mathbf{r}%
_{p}} $ into $k$ independent subsets, and by the definition of the graph $%
G_{n,\mathbf{r}_{p}},$ the elements of each of the $p$ subgraphs $%
K_{r_{_{1}}},\ldots ,K_{r_{_{p}}}$ must be in distinct subsets. So, this
number is exactly ${n+\left \vert \mathbf{r}_{p}\right \vert \brace k}_{\!\!\mathbf{r}_{p}}.$
Then, we may state that%
\begin{equation*}
\alpha _{k}\left( G_{n,\mathbf{r}_{p}}\right) ={n+\left \vert \mathbf{r}_{p}\right \vert \brace k}_{\!\!\mathbf{r}_{p}} .
\end{equation*}%
In particular%
\begin{equation*}
\alpha _{k}\left( K_{r}\cup O_{n}\right) ={n+r \brace k}_{\!\!r} \text{ \
and \ }\alpha _{k}\left( O_{n}\right) ={n\brace k}.
\end{equation*}%
Now, we present new proofs for some properties of the $\left( r_{1},\ldots
,r_{p}\right) $-Stirling numbers of the second kind and other
results.

\begin{theorem}
\label{T1}For $r_{1}\leq r_{2}\leq \cdots \leq r_{p},$ the polynomial $%
\left( z+r_{p}\right) _{r_{1}}\cdots \left( z+r_{p}\right) _{r_{p-1}}\left(
z+r_{p}\right) ^{n}$ can be written in the basis $\left \{ \left( \lambda
\right) _{k},\ k=0,1,\ldots,n+\left \vert \mathbf{r}_{p-1}\right \vert \right \} $ as follows%
\begin{equation*}
\left( z+r_{p}\right) _{r_{1}}\cdots \left( z+r_{p}\right) _{r_{p-1}}\left(
z+r_{p}\right) ^{n}=\sum_{k=0}^{n+\left \vert \mathbf{r}_{p-1}\right \vert
}{n+\left \vert \mathbf{r}_{p}\right \vert \brace k}_{\!\!\mathbf{r}_{p}}\left( z\right) _{k},
\end{equation*}%
where $\mathbf{r}_{p}:=\left( r_{1},\ldots ,r_{p}\right) $ and $\left \vert
\mathbf{r}_{p}\right \vert :=r_{1}+\cdots +r_{p}.$ In particular, we have
\begin{equation*}
\left( z+r\right) ^{n}=\sum_{k=0}^{n}{n+r \brace k+r}_{\!\!r}\left(
z\right) _{k}.
\end{equation*}
\end{theorem}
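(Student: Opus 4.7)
The plan is to recognise both sides as the same chromatic polynomial $P(G_{n,\mathbf{r}_p},\lambda)$ written in two different ways, then to substitute $\lambda\mapsto z+r_p$ and cancel a common factor. Using that the chromatic polynomial of a disjoint union factors as a product of chromatic polynomials of its components, one has
$$P(G_{n,\mathbf{r}_p},\lambda) = (\lambda)_{r_1}\cdots(\lambda)_{r_p}\,\lambda^n.$$
Combining the general expansion $P(G,\lambda)=\sum_i \alpha_i(G)(\lambda)_i$ with the identification $\alpha_k(G_{n,\mathbf{r}_p})={n+|\mathbf{r}_p|\brace k}_{\!\!\mathbf{r}_p}$ given just above the theorem, the same polynomial also equals
$$\sum_{k\ge 0}{n+|\mathbf{r}_p|\brace k}_{\!\!\mathbf{r}_p}(\lambda)_k,$$
with summand vanishing whenever $k<r_p=\chi(G_{n,\mathbf{r}_p})$ or $k>n+|\mathbf{r}_p|$.

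Equating these two presentations and substituting $\lambda=z+r_p$, the left-hand side becomes $(z+r_p)_{r_1}\cdots(z+r_p)_{r_p}(z+r_p)^n$, which already carries $(z+r_p)_{r_p}$ as an explicit factor. On the right-hand side I would apply the elementary factorisation
$$(z+r_p)_k = (z+r_p)_{r_p}\,(z)_{k-r_p}, \qquad k\ge r_p,$$
which is obtained by splitting the descending product at the transition from $z+1$ to $z$. Since the summand already vanishes below $k=r_p$, every surviving term acquires the same factor $(z+r_p)_{r_p}$, which may then be cancelled against the corresponding factor on the left. Reindexing by $j=k-r_p$ delivers the desired expansion in the basis of falling factorials $(z)_j$; the special case $p=1$ is the instance $\mathbf{r}_1=(r)$, $|\mathbf{r}_0|=0$.

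The only step deserving any care is the cancellation of $(z+r_p)_{r_p}$: this is not a genuine polynomial division, since the vanishing of ${n+|\mathbf{r}_p|\brace k}_{\!\!\mathbf{r}_p}$ for $k<r_p$ guarantees that every non-zero term on the right-hand side already contains $(z+r_p)_{r_p}$ as a literal factor. Apart from this observation, the argument is entirely mechanical and relies only on the product formula for chromatic polynomials of disjoint unions together with the Stirling interpretation of $\alpha_k(G_{n,\mathbf{r}_p})$ recorded in the preceding paragraph.
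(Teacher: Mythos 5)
Your proposal is correct and follows essentially the same route as the paper: both evaluate the chromatic polynomial of $G_{n,\mathbf{r}_p}$ at $\lambda=z+r_p$ in two ways, split $(z+r_p)_{k}=(z+r_p)_{r_p}(z)_{k-r_p}$ using the vanishing of the coefficients below $k=r_p$, and cancel the common factor $(z+r_p)_{r_p}$. Your extra remark justifying the cancellation is a harmless refinement (cancellation of a nonzero polynomial is valid in $\mathbb{Q}[z]$ regardless), and your reindexed form matches the paper's ``in particular'' case.
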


\begin{proof}
From \cite[Sec. 1.2]{don}, the chromatic polynomial of the graph $G_{n,%
\mathbf{r}_{p}}$ is%
\begin{eqnarray*}
P\left( G_{n,\mathbf{r}_{p}},z+r_{p}\right) &=&P\left( O_{n},z+r_{p}\right)
P\left( K_{r_{1}},z+r_{p}\right) \cdots P\left( K_{r_{p}},z+r_{p}\right) \\
&=&\left( z+r_{p}\right) ^{n}\left( z+r_{p}\right) _{r_{1}}\left(
z+r_{p}\right) _{r_{2}}\cdots \left( z+r_{p}\right) _{r_{p}},
\end{eqnarray*}%
and, by definition of the chromatic polynomial, we have%
\begin{eqnarray*}
P\left( G_{n,\mathbf{r}_{p}},z+r_{p}\right)& =&\sum_{k=0}^{n+\left \vert
\mathbf{r}_{p-1}\right \vert }{n+\left \vert \mathbf{r}%
_{p}\right \vert \brace k+r_{p}}_{\!\!\mathbf{r}_{p}}\left( z+r_{p}\right)
_{k+r_{p}} \\ &=&\sum_{k=0}^{n+\left \vert \mathbf{r}_{p-1}\right \vert }
{n+\left \vert \mathbf{r}_{p}\right \vert \brace k+r_{p}}_{\!\!\mathbf{r}%
_{p}}\left( z+r_{p}\right) _{r_{p}}\left( z\right) _{k}.
\end{eqnarray*}%
which complete the proof.
\end{proof}

\begin{remark}
\label{R1}From the following identity (see \cite{tom} or \cite[pp. 102]{don}%
) we have%
\begin{equation*}
\alpha _{k}\left( G_{n,\mathbf{r}_{p}}\right) =\frac{1}{k!}%
\sum_{j=0}^{k}\left( -1\right) ^{k-j}\binom{k}{j}P\left( G_{n,\mathbf{r}%
_{p}},j\right) ,
\end{equation*}%
or equivalently,%
\begin{eqnarray*}
{n+\left \vert \mathbf{r}_{p}\right \vert \brace k+r_{p}}_{\!\!\mathbf{r%
}_{p}} &=&\frac{1}{\left( k+r_{p}\right) !}\sum_{j=r_{p}}^{k+r_{p}}\left(
-1\right) ^{k+r_{p}-j}\binom{k+r_{p}}{j}\left( j\right) _{r_{1}}\cdots
\left( j\right) _{r_{p}}j^{n} \\
&=&\frac{1}{\left( k+r_{p}\right) !}\sum_{j=0}^{k}\left( -1\right) ^{k-j}%
\binom{k+r_{p}}{j+r_{p}}\left( j+r_{p}\right) _{r_{1}}\cdots \left(
j+r_{p}\right) _{r_{p}}\left( j+r_{p}\right) ^{n} \\
&=&\frac{1}{k!}\sum_{j=0}^{k}\left( -1\right) ^{k-j}\binom{k}{j}\left(
j+r_{p}\right) _{r_{1}}\cdots \left( j+r_{p}\right) _{r_{p-1}}\left(
j+r_{p}\right) ^{n}.
\end{eqnarray*}%
So, the two initial values of these numbers are given by%
\begin{eqnarray*}
{n+\left \vert \mathbf{r}_{p}\right \vert \brace r_{p}}_{\!\!\mathbf{r}%
_{p}} &=&\left( r_{p}\right) ^{n}\left( r_{p}\right) _{r_{1}}\cdots \left(
r_{p}\right) _{r_{p-1}}, \\
{n+\left \vert \mathbf{r}_{p}\right \vert \brace r_{p}+1}_{\!\!\mathbf{r%
}_{p}} &=&\left( r_{p}+1\right) _{r_{1}}\cdots \left( r_{p}+1\right)
_{r_{p-1}}\left( r_{p}+1\right) ^{n}-\left( r_{p}\right) _{r_{1}}\cdots
\left( r_{p}\right) _{r_{p-1}}\left( r_{p}\right) ^{n}.
\end{eqnarray*}
\end{remark}

\noindent Now, set $K_{r_{_{i}}}\cup H:=K_{r_{_{1}}}\cup \cdots \cup K_{r_{_{p}}}\cup O_{n} \ \left(i=1,\ldots ,p\right)$ in Theorem \ref{TT2} to obtain:

\begin{corollary}
Let $n, \ s$ and $k$ be nonnegative integers with $n\geq \left \vert \mathbf{r}_{p}\right \vert $.
Then, for a fixed $ i$ in the set $\left\{1,\ldots ,p\right\}$,
if $r_{i}\geq 1$  we have%
\begin{equation*}
{n \brace k}_{\!\!\mathbf{r}_{p}}={n-1 \brace k-1}_{\!\!\mathbf{r}%
_{p}-e_{i}}+\left( k+1-r_{i}\right) {n-1 \brace k}_{\!\!\mathbf{r}%
_{p}-e_{i}},
\end{equation*}%
where $\mathbf{e}_{i}$ denote the i-$th$ vector of the canonical basis of $%
\mathbb{R}^{p}.$
\end{corollary}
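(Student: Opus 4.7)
The plan is to apply the $s=1$ case of Theorem \ref{TT2} to a decomposition of $G_{n,\mathbf{r}_p}$ that isolates the $K_{r_i}$ factor. Fix $i\in\{1,\ldots,p\}$ with $r_i\geq 1$, put $m:=n-|\mathbf{r}_p|$ (which is nonnegative by the hypothesis $n\geq|\mathbf{r}_p|$), and take
\begin{equation*}
H := K_{r_{1}}\cup\cdots\cup K_{r_{i-1}}\cup K_{r_{i+1}}\cup\cdots\cup K_{r_{p}}\cup O_{m}.
\end{equation*}
In the notation of Theorem \ref{TT2}, temporarily relabelling its index ``$n$'' as $r_i$, one then has $K_{r_i}\cup H = G_{m,\mathbf{r}_p}$ and $K_{r_i-1}\cup H = G_{m,\mathbf{r}_p-\mathbf{e}_i}$, since deleting a single vertex from the $K_{r_i}$-component of $G_{m,\mathbf{r}_p}$ is the same as replacing the entry $r_i$ of $\mathbf{r}_p$ by $r_i-1$.

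With these identifications, the $s=1$ specialization of Theorem \ref{TT2} yields
\begin{equation*}
\alpha_k(G_{m,\mathbf{r}_p}) = (k-r_i+1)\,\alpha_k(G_{m,\mathbf{r}_p-\mathbf{e}_i}) + \alpha_{k-1}(G_{m,\mathbf{r}_p-\mathbf{e}_i}).
\end{equation*}
Invoking the identity $\alpha_k(G_{m,\mathbf{r}_p}) = {m+|\mathbf{r}_p| \brace k}_{\!\!\mathbf{r}_p}$ established just before the corollary, and observing that $m+|\mathbf{r}_p|=n$ while $m+|\mathbf{r}_p-\mathbf{e}_i|=n-1$, this equality translates verbatim into the claimed Stirling-number recurrence.

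There is no substantive obstacle; the only point demanding care is the bookkeeping of two distinct uses of the letter $n$ (the variable $n$ in the hypothesis of Theorem \ref{TT2} plays the role of $r_i$ here, whereas the $n$ of the corollary indexes the full ground set), together with a verification that the admissible range of $k$ in the Stirling-number recurrence is consistent with the chromatic-number lower bound $\max(r_i,\chi(H))$ that governs when Theorem \ref{TT2} is nontrivial; outside that range both sides vanish by the first bullet in the list characterising ${n \brace k}_{\!\!r_1,\ldots,r_p}$, so the identity holds trivially there as well.
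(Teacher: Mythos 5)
Your proof is correct and takes essentially the same route as the paper, which simply instructs the reader to set $K_{r_{i}}\cup H:=K_{r_{1}}\cup \cdots \cup K_{r_{p}}\cup O_{n}$ in Theorem~\ref{TT2} and read off the $s=1$ case. Your explicit bookkeeping via $m=n-\left\vert \mathbf{r}_{p}\right\vert$, which disentangles the two conflicting uses of the letter $n$, is a clarification of the paper's argument rather than a different approach.
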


\noindent A particular case of the last corollary is when $p=1$ we obtain
the following known identity%
\begin{equation*}
{n \brace k}_{\!\!r}={n-1 \brace k-1}_{\!\!r-1}+\left( k+1-r\right){n-1 \brace k}_{\!\!r-1}.
\end{equation*}

\noindent By setting $ H:=K_{r_{_{1}}}\cup \cdots \cup K_{r_{_{p}}}$ in Theorem \ref{TT1} we obtain:

\begin{corollary}
\label{C1}Let $n,\ k$ be integers such that $r_{p}\leq k\leq n$ and $n\geq
\left \vert \mathbf{r}_{p}\right \vert .$ We have%
\begin{equation*}
{n \brace k}_{\!\!\mathbf{r}_{p}}=\sum_{j=r_{p}}^{k}{s+j \brace k}_{\!\!j}
{n-s \brace j}_{\!\!\mathbf{r}_{p}},\  \ 0\leq s\leq n-\left \vert \mathbf{r}_{p}\right \vert .
\end{equation*}%
In particular, for $s=1,$ these numbers obey to the following recurrence relation:%
\begin{equation*}
{n \brace k}_{\!\!\mathbf{r}_{p}}={n-1 \brace k-1}_{\!\!\mathbf{r}%
_{p}}+k{n-1 \brace k}_{\!\!\mathbf{r}_{p}},
\end{equation*}%
and for $s=n-\left \vert \mathbf{r}_{p}\right \vert $ we get%
\begin{equation*}
{n \brace k}_{\!\!\mathbf{r}_{p}}=\sum_{j=r_{p}}^{k}
{n-\left \vert \mathbf{r}_{p}\right \vert +j \brace k}_{\!\!j}{\left
\vert \mathbf{r}_{p}\right \vert  \brace j}_{\!\!\mathbf{r}_{p}}.
\end{equation*}
\end{corollary}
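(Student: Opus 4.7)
The plan is to deduce the identity as a direct specialization of Theorem \ref{TT1}. I set $H:=K_{r_{1}}\cup\cdots\cup K_{r_{p}}$, a graph whose number of vertices is $h=|\mathbf{r}_{p}|$ and whose chromatic number, since $r_{1}\leq r_{2}\leq \cdots\leq r_{p}$, is
\begin{equation*}
\chi(H)=\max(\chi(K_{r_{1}}),\ldots,\chi(K_{r_{p}}))=r_{p}.
\end{equation*}
With this choice, $O_{n}\cup H$ is precisely the graph $G_{n,\mathbf{r}_{p}}$ introduced at the beginning of the section, and the identification
\begin{equation*}
\alpha_{k}(O_{n}\cup H)=\alpha_{k}(G_{n,\mathbf{r}_{p}})={n+|\mathbf{r}_{p}|\brace k}_{\!\!\mathbf{r}_{p}}
\end{equation*}
already established in the text provides the bridge between the chromatic-polynomial coefficients and the restricted Stirling numbers.

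Next, I substitute directly into the general identity of Theorem \ref{TT1}, which for any $0\leq s\leq n$ and $r_{p}\leq k\leq n+|\mathbf{r}_{p}|$ reads
\begin{equation*}
\alpha_{k}(O_{n}\cup H)=\sum_{j=r_{p}}^{k}{s+j\brace k}_{\!\!j}\,\alpha_{j}(O_{n-s}\cup H).
\end{equation*}
This yields
\begin{equation*}
{n+|\mathbf{r}_{p}|\brace k}_{\!\!\mathbf{r}_{p}}=\sum_{j=r_{p}}^{k}{s+j\brace k}_{\!\!j}{n-s+|\mathbf{r}_{p}|\brace j}_{\!\!\mathbf{r}_{p}}.
\end{equation*}
A simple reparametrization, replacing $n$ by $n-|\mathbf{r}_{p}|$ (so that the running index $n$ in the corollary plays the role of $n+|\mathbf{r}_{p}|$ in Theorem \ref{TT1}), converts the range $0\leq s\leq n$ into $0\leq s\leq n-|\mathbf{r}_{p}|$ and the bound $k\leq n+|\mathbf{r}_{p}|$ into $k\leq n$, giving the main identity of the corollary.

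Finally, the two particular cases fall out by specialization. For $s=n-|\mathbf{r}_{p}|$, the inner factor becomes ${|\mathbf{r}_{p}|\brace j}_{\!\!\mathbf{r}_{p}}$, producing the second displayed identity immediately. For $s=1$, only the terms with $j=k-1$ and $j=k$ survive the sum since ${j+1\brace k}_{\!\!j}$ vanishes unless $k\in\{j,j+1\}$; a direct count of partitions of $[j+1]$ with the first $j$ elements in distinct blocks gives ${j+1\brace j+1}_{\!\!j}=1$ and ${k+1\brace k}_{\!\!k}=k$, whence
\begin{equation*}
{n\brace k}_{\!\!\mathbf{r}_{p}}={n-1\brace k-1}_{\!\!\mathbf{r}_{p}}+k\,{n-1\brace k}_{\!\!\mathbf{r}_{p}}.
\end{equation*}

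There is no genuine obstacle in this proof: the whole argument is bookkeeping. The only point that requires a moment of care is the index shift between the two natural parametrizations (vertex count $n+h$ in Theorem \ref{TT1} versus $n$ in the corollary) and the verification that the sum bounds and the support ranges of the restricted Stirling numbers remain consistent after the substitution.
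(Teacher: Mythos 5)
Your proof is correct and follows exactly the paper's route: the paper obtains Corollary \ref{C1} precisely by setting $H:=K_{r_{1}}\cup\cdots\cup K_{r_{p}}$ in Theorem \ref{TT1} and invoking the identification $\alpha_{k}(G_{n,\mathbf{r}_{p}})={n+\left\vert\mathbf{r}_{p}\right\vert\brace k}_{\!\!\mathbf{r}_{p}}$, with the same reindexing $n\mapsto n-\left\vert\mathbf{r}_{p}\right\vert$. Your explicit verification of the $s=1$ case via ${k\brace k}_{\!\!k-1}=1$ and ${k+1\brace k}_{\!\!k}=k$ is a detail the paper leaves implicit, but the argument is the same.
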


\noindent In particular, for $p=s=1$ in Corollary \ref{C1}, we obtain the known identity%
\begin{equation*}
{n \brace k}_{\!\!r}={n-1 \brace k-1}_{\!\!r}+k{n-1 \brace k}_{\!\!r},
\end{equation*}%
and for $s=n-\left \vert \mathbf{r}_{p}\right \vert ,$ $p=2$ and $\left(
r_{1},r_{2}\right) =\left( 1,r\right) $ in Corollary \ref{C1}, we obtain
\begin{equation*}
{n \brace k}_{\!\!r}={n \brace k}_{\!\!1,r}=\left( r+1\right)
{n-1 \brace k}_{\!\!r}+{n \brace k}_{\!\!r+1}.
\end{equation*}%
Now, upon using Proposition \ref{P1}, we may state the following corollary.
\begin{corollary}
\label{C3}For $0\leq k\leq n$ let
\begin{equation*}
U\left( n,k\right) ={n+\left \vert \mathbf{r}_{p}\right \vert
\brace k+\left \vert \mathbf{r}_{p}\right \vert }_{\!\!\mathbf{r}_{p}}.
\end{equation*}%
Then, the sequence $\left( U\left( n,k\right) ,\ 0\leq k\leq n\right) $ is
log-concave and P\'{o}lya frequency sequence, and, the sequence of
polynomials $\left( U_{n}\left( q\right) ,n\geq 0\right) $ defined by
\begin{equation*}
U_{n}\left( q\right) =\overset{n}{\underset{k=0}{\sum }}{n+\left \vert \mathbf{r}_{p}\right \vert
\brace k+\left \vert \mathbf{r}_{p}\right \vert }_{\!\!\mathbf{r}_{p}}q^{k}
\end{equation*}%
is a $q$-log-convex sequence.
\end{corollary}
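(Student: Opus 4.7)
The plan is to reduce the corollary to a direct application of Proposition \ref{P1}. Set $H := K_{r_{1}} \cup \cdots \cup K_{r_{p}}$, so that $h = |\mathbf{r}_{p}|$ and $O_{n} \cup H = G_{n,\mathbf{r}_{p}}$. Using the already-established identification
\begin{equation*}
\alpha_{k}(G_{n,\mathbf{r}_{p}}) = {n+|\mathbf{r}_{p}| \brace k}_{\!\!\mathbf{r}_{p}},
\end{equation*}
one immediately obtains
\begin{equation*}
\alpha_{k+h}(O_{n} \cup H) = {n+|\mathbf{r}_{p}| \brace k+|\mathbf{r}_{p}|}_{\!\!\mathbf{r}_{p}} = U(n,k).
\end{equation*}

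With this identification, the sequence $U(n,k)$ coincides exactly with the sequence of the same name appearing in Proposition \ref{P1} for the chosen $H$. The corresponding generating polynomial $U_{n}(q) = \sum_{k=0}^{n} U(n,k) q^{k}$ also matches. Hence, by Proposition \ref{P1}, the sequence $(U(n,k),\ 0 \leq k \leq n)$ is log-concave and a P\'{o}lya frequency sequence, and the polynomial sequence $(U_{n}(q),\ n \geq 0)$ is $q$-log-convex.

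There is essentially no obstacle beyond the bookkeeping of matching $h = |\mathbf{r}_{p}|$ and checking that the recurrence from Theorem \ref{TT1} specialized to $O_{n} \cup H$ with this $H$ is indeed the one invoked in the proof of Proposition \ref{P1}; both the log-concavity criterion of \cite[Thm.~2]{kur}, the P\'{o}lya-frequency criterion of \cite[Cor.~3]{wan}, and the $q$-log-convexity criterion of \cite[Thm.~4.1]{liu} have already been verified there for the triangular array $\alpha_{k+h}(O_{n} \cup H)$ regardless of which fixed graph $H$ is chosen, so no separate verification of the sign conditions $a_{1}\geq 0$, $a_{2}b_{1}\geq a_{1}b_{2}$, etc., is needed here.
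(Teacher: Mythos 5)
Your proof is correct and is exactly the paper's argument: the paper introduces Corollary \ref{C3} with the phrase ``upon using Proposition \ref{P1}'', i.e.\ it likewise takes $H:=K_{r_{1}}\cup\cdots\cup K_{r_{p}}$ (so $h=\left\vert \mathbf{r}_{p}\right\vert$) and reads off the conclusion from the identification $\alpha _{k}\left( G_{n,\mathbf{r}_{p}}\right) ={n+\left\vert \mathbf{r}_{p}\right\vert \brace k}_{\!\!\mathbf{r}_{p}}$. Nothing further is needed.
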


\section{Application to the graph $K_{r_{1},\ldots,r_{p}}\cup O_{n}$}

Let $p\geq 2$ be an integer. For a second application of the chromatic
polynomials, we consider in this section the graph $K_{n,\mathbf{r}%
_{p}}=K_{r_{1},\ldots ,r_{p}}\cup O_{n}$ of
order $n+r_{1}+\cdots +r_{p}$ and chromatic number%
\begin{equation*}
\chi \left( K_{n,\mathbf{r}_{p}}\right) =\max \left( \chi \left(
K_{r_{1},\ldots ,r_{p}}\right) ,\chi \left( O_{n}\right) \right) =\max
\left( p,1\right)=p .
\end{equation*}%
Similarly to the numbers $\left( r_{1},\ldots ,r_{p}\right) $-Stirling
numbers, we present here a new
class of the Stirling numbers having also triangular recurrence relation.%
\newline
To start, let us giving the following definition.

\begin{definition}
Let $R_{1},\ldots ,R_{p}$ be subsets of the set $\left[ n\right] $ with $%
\left \vert R_{i}\right \vert =r_{i}$ and $R_{i}\cap R_{j}=\varnothing $ for
all $i,j=1,\ldots ,p,$ $i\neq j.$ The $K\left( r_{1},\ldots ,r_{p}\right) $%
-Stirling number of the second kind, denoted by $
{n \brace k}_{\!\!K\left( r_{1},\ldots ,r_{p}\right) },$ counts the number of
partitions of the set $\left[ n\right] $ into $k$ non-empty subsets such
that if $x\in R_{i}$ and $y\in R_{j}$ with $i\neq j,$ then each subset do
not containing simultaneously $x$ and $y$.
\end{definition}

\noindent From this definition, we may state the following:%
\begin{eqnarray*}
{n \brace k}_{\!\!K\left( r_{1},\ldots ,r_{p}\right) } &=&0,\text{ \  \  \
}n<r_{1}+\cdots +r_{p}\text{ or }k<p, \\
{n \brace k}_{\!\!K\left( r_{1},\ldots ,r_{p}\right) } &=&
{n \brace k}_{\!\!r_{1}+\ldots +r_{p}}\text{ \ if }r_{1},\ldots ,r_{p}\in \left \{
0,1\right \} , \\
{n \brace k}_{\!\!K\left( r_{1},\ldots ,r_{p}\right) } &=&
{n \brace k}_{\!\!K\left( r_{\sigma \left( 1\right) },\ldots ,r_{\sigma \left(
p\right) }\right) }\text{ for all permutations }\sigma \text{ on the set }%
\left \{ 1,\ldots ,p\right \} .
\end{eqnarray*}%
Furthermore, for $r_{1}\leq r_{2}\leq \cdots \leq r_{p},$ the coefficient $%
\alpha _{k}\left( K_{n,\mathbf{r}_{p}}\right) $ represents the number of
ways of partitioning the set  of $%
n+\left \vert \mathbf{r}_{p}\right \vert $ vertices of $K_{n,\mathbf{r}%
_{p}} $ into $k$ independent sets, and by the definition of the graph $K_{n,%
\mathbf{r}_{p}},$ any two elements $x$ of the $i$-th block of the subgraph $%
K_{r_{1},\ldots ,r_{p}}$ and $y$ of the $j$-th block of $K_{r_{1},\ldots
,r_{p}},$ with $i\neq j,$ can't be in the same subset. So, this number is
exactly ${n+r_{1}+\cdots
+r_{p} \brace k}_{\!\!K\left( r_{1},\ldots ,r_{p}\right) }.\ $Then, we may state that%
\begin{equation*}
\alpha _{k}\left( K_{n,\mathbf{r}_{p}}\right) ={n+r_{1}+\cdots
+r_{p} \brace k}_{\!\!K\left( r_{1},\ldots ,r_{p}\right) }:={n+\left
\vert \mathbf{r}_{p}\right \vert  \brace k}_{\!\!K\left( \mathbf{r}_{p}\right) }.
\end{equation*}%

\noindent Set $ H:=K_{r_{1},\ldots ,r_{p}}$ in Theorem \ref{TT1} to obtain:

\begin{corollary}
\label{C2}Let $n,\ k$ be integers such that $p\leq k\leq n$ and $n\geq
\left \vert \mathbf{r}_{p}\right \vert .$ We have%
\begin{equation*}
{n \brace k}_{\!\!K\left( \mathbf{r}_{p}\right) }=\sum_{j=p}^{k}{s+j \brace k}_{\!\!j}{n-s \brace j}_{\!\!K\left( \mathbf{r}_{p}\right) },\  \
0\leq s\leq n-\left \vert \mathbf{r}_{p}\right \vert .
\end{equation*}%
For $s=1$ we conclude that these numbers obey to the recurrence relation:%
\begin{equation*}
{n \brace k}_{\!\!K\left( \mathbf{r}_{p}\right) }=k
{n-1 \brace k}_{\!\!K\left( \mathbf{r}_{p}\right) }+{n-1 \brace k-1}_{\!\!K\left(
\mathbf{r}_{p}\right) },\  \  \ n\geq k\geq 1.
\end{equation*}%
and for $s=n-\left \vert \mathbf{r}_{p}\right \vert $ in Corollary \ref{C2},
we get%
\begin{equation*}
{n \brace k}_{\!\!K\left( \mathbf{r}_{p}\right) }=\sum_{j=p}^{k}
{n-\left \vert \mathbf{r}_{p}\right \vert +j \brace k}_{\!\!j}
{\left \vert \mathbf{r}_{p}\right \vert  \brace j}_{\!\!K\left( \mathbf{r}_{p}\right) }.
\end{equation*}
\end{corollary}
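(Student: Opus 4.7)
The plan is to obtain the general identity as a direct specialization of Theorem \ref{TT1}, using the combinatorial identification $\alpha_k(K_{m,\mathbf{r}_p})={m+|\mathbf{r}_p|\brace k}_{\!\!K(\mathbf{r}_p)}$ established just before the corollary. First I would set $H:=K_{r_1,\ldots,r_p}$ in Theorem \ref{TT1}. For this to work I must verify that $\chi(H)=p$, which is the standard fact that the complete multipartite graph $K_{r_1,\ldots,r_p}$ has chromatic number equal to the number of parts. With this choice, the graph $O_m\cup H$ is precisely $K_{m,\mathbf{r}_p}$.

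Next I would apply Theorem \ref{TT1} in the form
\begin{equation*}
\alpha_k(O_m\cup H)=\sum_{j=\chi(H)}^{k}{s+j\brace k}_{\!\!j}\,\alpha_j(O_{m-s}\cup H),\qquad 0\le s\le m,
\end{equation*}
with $m=n-|\mathbf{r}_p|$ so that $n$ plays the role of the total number of vertices used in the notation ${n\brace k}_{\!\!K(\mathbf{r}_p)}$. Replacing $\alpha_k(O_m\cup H)$ by ${n\brace k}_{\!\!K(\mathbf{r}_p)}$ and $\alpha_j(O_{m-s}\cup H)$ by ${n-s\brace j}_{\!\!K(\mathbf{r}_p)}$, and using $\chi(H)=p$, yields the claimed general identity; the hypothesis $0\le s\le n-|\mathbf{r}_p|$ is exactly the constraint $0\le s\le m$ transcribed into the new variable.

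For the specialization $s=1$, I would invoke the one-step recurrence already recorded at the end of Theorem \ref{TT1} (namely $\alpha_k(O_m\cup H)=k\alpha_k(O_{m-1}\cup H)+\alpha_{k-1}(O_{m-1}\cup H)$) with the same $H$, translating it to the $K(\mathbf{r}_p)$-Stirling notation. For the specialization $s=n-|\mathbf{r}_p|$, i.e.\ reducing $O_{m-s}$ to $O_0$, I would use that $\alpha_j(H)=\alpha_j(K_{0,\mathbf{r}_p})={|\mathbf{r}_p|\brace j}_{\!\!K(\mathbf{r}_p)}$, which substitutes directly into the general formula.

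There is no substantive obstacle: the only thing that needs a moment of care is the bookkeeping between the "effective" parameter $m=n-|\mathbf{r}_p|$ (size of the $O_m$ part) and the indexing variable $n$ (total number of elements) used in the numbers ${n\brace k}_{\!\!K(\mathbf{r}_p)}$, together with the observation that $\chi(K_{r_1,\ldots,r_p})=p$ so that the lower summation bound becomes $j=p$ rather than the generic $\chi(H)$ appearing in Theorem \ref{TT1}.
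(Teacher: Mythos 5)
Your proposal is correct and is exactly the paper's route: the corollary is obtained by setting $H:=K_{r_1,\ldots,r_p}$ in Theorem \ref{TT1}, using $\chi(K_{r_1,\ldots,r_p})=p$ and the identification $\alpha_k(K_{m,\mathbf{r}_p})={m+\left\vert \mathbf{r}_p\right\vert \brace k}_{\!\!K(\mathbf{r}_p)}$ established just before the statement, with the same reindexing $m=n-\left\vert \mathbf{r}_p\right\vert$. The two specializations $s=1$ and $s=n-\left\vert \mathbf{r}_p\right\vert$ are handled just as in the paper.
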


\noindent In particular, for $s=r_{1}=\cdots =r_{p}=1$ in Corollary \ref{C2}
we obtain the known identity:%
\begin{equation*}
{n \brace k}_{\!\!p}=k{n-1 \brace k}_{\!\!p}+{n-1 \brace k-1}_{\!\!p}.
\end{equation*}

\begin{remark}
By setting $s=1\ $and $k=p$ or $p+1$ in Corollary \ref{C2} and upon using the identities given in \cite{zou} (see also \cite[Lemma 4.4.2]{don}) by
\begin{eqnarray*}
{\left \vert \mathbf{r}_{p}\right\vert \brace p}_{\!\!K\left( \mathbf{r}_{p}\right) }=1, \ \ \ {\left
\vert \mathbf{r}_{p}\right \vert \brace p+1}_{\!\!K\left( \mathbf{r}%
_{p}\right) }=\sum_{j=1}^{p}2^{r_{j}-1}-p,
\end{eqnarray*}
we obtain the two initial values of these numbers%
\begin{eqnarray*}
{n \brace p}_{\!\!K\left( \mathbf{r}_{p}\right) } &=&
{n-\left \vert \mathbf{r}_{p}\right \vert +p \brace p}_{\!\!p}, \\
{n \brace p+1}_{\!\!K\left( \mathbf{r}_{p}\right) } &=&
{n-\left \vert \mathbf{r}_{p}\right \vert +p \brace p+1}_{\!\!p}+
{n-\left \vert \mathbf{r}_{p}\right \vert +p+1 \brace p+1}_{\!\!p+1}\left(
\sum_{j=1}^{p}2^{r_{j}-1}-p\right) .
\end{eqnarray*}
\end{remark}

\begin{proposition}
Let
\begin{equation*}
B\left( \lambda ;K_{n,\mathbf{r}_{p}}\right) :=\underset{k\geq 0}{\sum }%
{n+\left \vert \mathbf{r}_{p}\right \vert  \brace k}_{\!\!K\left(
\mathbf{r}_{p}\right) }\lambda ^{k}.
\end{equation*}%
Then, we have%
\begin{eqnarray*}
 B\left( \lambda ;K_{n,\mathbf{r}_{p}}\right)
&=&\lambda\exp \left( -\lambda \right) \frac{d}{d\lambda }\left( \exp \left( \lambda \right) B\left(
\lambda ;K_{n-1,\mathbf{r}_{p}}\right) \right) ,\  \ n\geq 1,\\
B\left( \lambda ;K_{0,\mathbf{r}_{p}}\right) &=&B_{r_{1}}\left( \lambda
\right) \cdots B_{r_{p}}\left( \lambda \right),
\end{eqnarray*}%
where $B_{n}\left(\lambda \right) =\sum_{j=0}^{n} {n \brace j}\lambda ^{j}$ is the classical Bell polynomial.
\end{proposition}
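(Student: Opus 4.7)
My plan is to split the proof into two pieces corresponding to the recurrence on $n$ and the base case $n=0$, and to handle each by a short direct calculation.

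For the recurrence, I would start from the triangular relation given in Corollary \ref{C2} with $s=1$, written in the form
\begin{equation*}
{n+|\mathbf{r}_p| \brace k}_{\!\!K(\mathbf{r}_p)} = k{n+|\mathbf{r}_p|-1 \brace k}_{\!\!K(\mathbf{r}_p)} + {n+|\mathbf{r}_p|-1 \brace k-1}_{\!\!K(\mathbf{r}_p)}.
\end{equation*}
Multiplying by $\lambda^k$ and summing over $k\geq 0$, the first term becomes $\lambda\,\frac{d}{d\lambda}B(\lambda;K_{n-1,\mathbf{r}_p})$ (since $\sum_k k\,a_k\lambda^k=\lambda(\sum_k a_k\lambda^k)'$) and the second term becomes $\lambda\,B(\lambda;K_{n-1,\mathbf{r}_p})$ after a shift $k\mapsto k-1$. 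Hence
\begin{equation*}
B(\lambda;K_{n,\mathbf{r}_p})=\lambda\bigl(B(\lambda;K_{n-1,\mathbf{r}_p})+B'(\lambda;K_{n-1,\mathbf{r}_p})\bigr),
\end{equation*}
and the identity $e^{-\lambda}\frac{d}{d\lambda}(e^{\lambda}f)=f+f'$ then packages this exactly into the stated form $\lambda e^{-\lambda}\frac{d}{d\lambda}(e^{\lambda}B(\lambda;K_{n-1,\mathbf{r}_p}))$.

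For the base case $B(\lambda;K_{0,\mathbf{r}_p})=B_{r_1}(\lambda)\cdots B_{r_p}(\lambda)$, I would use the combinatorial interpretation $\alpha_k(K_{0,\mathbf{r}_p})={|\mathbf{r}_p|\brace k}_{K(\mathbf{r}_p)}$ together with the structure of independent sets in the complete multipartite graph $K_{r_1,\ldots,r_p}$. An independent set of $K_{r_1,\ldots,r_p}$ must lie entirely inside a single part $R_i$ (since vertices in distinct parts are adjacent), so partitioning the $|\mathbf{r}_p|$ vertices into $k$ independent sets amounts to partitioning each $R_i$ independently into some $k_i$ blocks with $k_1+\cdots+k_p=k$. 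This gives
\begin{equation*}
{|\mathbf{r}_p|\brace k}_{\!\!K(\mathbf{r}_p)}=\sum_{k_1+\cdots+k_p=k}\prod_{i=1}^{p}{r_i\brace k_i},
\end{equation*}
and multiplying by $\lambda^k$ and summing factorizes into $\prod_{i=1}^{p}\bigl(\sum_{k_i}{r_i\brace k_i}\lambda^{k_i}\bigr)=\prod_{i=1}^{p}B_{r_i}(\lambda)$.

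Neither step presents a genuine obstacle; the only point needing care is the combinatorial justification for the base case, where one must explicitly argue that no independent set of $K_{r_1,\ldots,r_p}$ can meet two different parts. Once this is noted, the product factorization into Bell polynomials is automatic, and combining with the differential recurrence completes the proof.
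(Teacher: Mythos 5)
Your proof is correct, and the recurrence half is exactly the paper's argument: take Corollary \ref{C2} with $s=1$, multiply the triangular relation by $\lambda^{k}$, sum over $k$, and recognize $\lambda\left( f+f^{\prime }\right) =\lambda e^{-\lambda }\frac{d}{d\lambda }\left( e^{\lambda }f\right) $. The only divergence is in the base case: the paper dispatches $B\left( \lambda ;K_{0,\mathbf{r}_{p}}\right) =B_{r_{1}}\left( \lambda \right) \cdots B_{r_{p}}\left( \lambda \right) $ by citing \cite[Lemma 4.4.1]{don}, whereas you prove it directly by noting that every independent set of the complete multipartite graph $K_{r_{1},\ldots ,r_{p}}$ is contained in a single part, so the coefficients are the convolution $\sum_{k_{1}+\cdots +k_{p}=k}\prod_{i}{r_{i}\brace k_{i}}$ and the generating polynomial factors into Bell polynomials. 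Your version is self-contained and equally valid; the paper's is shorter but leans on an external lemma.
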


\begin{proof}
From Corollary \ref{C2} we have%
\begin{equation*}
\underset{k\geq 1}{\sum }{n+\left \vert \mathbf{r}_{p}\right \vert \brace k}_{\!\!K\left( \mathbf{r}_{p}\right)
}\lambda^{k}=\underset{k\geq 1}{\sum }k{n+\left \vert \mathbf{r}_{p}\right \vert-1 \brace k}_{\!\!K\left( \mathbf{r}%
_{p}\right) }\lambda^{k}+\underset{k\geq 1}{\sum }{n+\left \vert \mathbf{r}_{p}\right \vert-1 \brace k-1}_{\!\!K\left(
\mathbf{r}_{p}\right) }\lambda^{k}
\end{equation*}%
which gives the first identity. The second one follows from \cite[Lemma 4.4.1]{don}.
\end{proof}

\begin{corollary}
For $n\geq \left \vert \mathbf{r}_{p}\right \vert ,$ the numbers
${n \brace k}_{\!\!K\left( \mathbf{r}_{p}\right) },\ k=p,p+1,\ldots ,n,$ are
log-concave.
\end{corollary}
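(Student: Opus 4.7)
The plan is to show that the polynomial $B_m(\lambda) := B(\lambda; K_{m, \mathbf{r}_p}) = \sum_{k \geq p} {m + |\mathbf{r}_p| \brace k}_{K(\mathbf{r}_p)} \lambda^k$ has only real (non-positive) roots for every $m \geq 0$, and then to invoke the classical fact that such a polynomial has a log-concave---in fact P\'olya-frequency---coefficient sequence. After the substitution $n := m + |\mathbf{r}_p|$ (with $n \geq |\mathbf{r}_p|$), this will yield the stated log-concavity on the whole support $p \leq k \leq n$.

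For the base case $m = 0$, the preceding proposition supplies $B_0(\lambda) = B_{r_1}(\lambda) \cdots B_{r_p}(\lambda)$; by Harper's theorem each classical Bell polynomial $B_{r_i}$ has only real (non-positive) roots, so the product does as well. For the inductive step, the same proposition gives $B_m(\lambda) = \lambda e^{-\lambda}(e^\lambda B_{m-1})' = \lambda\bigl(B_{m-1} + B_{m-1}'\bigr)$. I would prove that the operator $L : f \mapsto \lambda(f + f')$ preserves the class of polynomials with nonnegative coefficients and only real roots. Nonnegativity is immediate, since the coefficients of $f + f'$ are of the form $c_k + (k+1)c_{k+1}$. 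Real-rootedness of $f + f'$ follows from a Rolle argument applied to the entire function $e^\lambda f(\lambda)$: this function vanishes at the real roots of $f$ with the same multiplicities and tends to $0$ as $\lambda \to -\infty$, producing exactly $\deg f$ real zeros of its derivative $e^\lambda (f + f')$ (namely $\deg f - s$ zeros coming from the roots of $f$ of multiplicity $\geq 2$, plus one zero in each of the $s$ gaps separating the distinct real roots from $-\infty$), hence $\deg f$ real zeros of $f + f'$. Multiplication by $\lambda$ only appends a zero at the origin.

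By induction, $B_m(\lambda)$ is real-rooted with nonnegative coefficients for every $m \geq 0$. Since $B_m$ has a zero of multiplicity exactly $p$ at the origin and all other zeros are strictly negative, writing $B_m(\lambda) = c\, \lambda^p \prod_j (\lambda + \beta_j)$ with $\beta_j > 0$, Newton's inequalities for elementary symmetric functions (together with the log-concavity of binomial coefficients) give log-concavity of the coefficients $\bigl(c\, e_{n-k}(\beta)\bigr)_{k = p}^{n}$, which are precisely the ${n \brace k}_{K(\mathbf{r}_p)}$. The main technical obstacle is the real-rootedness lemma for $L$; once that is in hand the rest is routine. This approach is essentially forced by the previous proposition, and has the advantage that it treats the full range $p \leq k \leq n$ in one shot---a direct appeal to Proposition \ref{P1} would only give log-concavity on the subrange $k \geq |\mathbf{r}_p|$ and would miss the gap $p \leq k < |\mathbf{r}_p|$ when $|\mathbf{r}_p| > p$.
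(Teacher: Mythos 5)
Your proposal is correct and follows essentially the same route as the paper: induction on $n$ using the recurrence $B(\lambda;K_{n,\mathbf{r}_p})=\lambda e^{-\lambda}\frac{d}{d\lambda}\bigl(e^{\lambda}B(\lambda;K_{n-1,\mathbf{r}_p})\bigr)$ together with Rolle's theorem applied to $e^{\lambda}B(\lambda;K_{n,\mathbf{r}_p})$, the real-rootedness of the Bell polynomials for the base case, and Newton's inequality to conclude. You have merely filled in the zero-counting details that the paper leaves implicit.
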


\begin{proof}
Apply Role's Theorem on the function $f_{n}\left( \lambda \right) :=\exp
\left( \lambda \right) B\left( \lambda ;K_{n,\mathbf{r}_{p}}\right) $ and
use the fact that the roots of the polynomial $B_{n}\left( \lambda \right) $
are real non-positive (see for example \cite {wan}) to conclude (by induction on $n$) that the polynomial $%
B\left( \lambda ;K_{n,\mathbf{r}_{p}}\right) $ has only real non-positive
roots. After that, apply Newton's inequality \cite[p. 52]{har} to complete
the proof.
\end{proof}
\noindent Similarly to Corollary \ref{C3}, Proposition \ref{P1} states that we have
\begin{corollary}
For $0\leq k\leq n$ let
\begin{equation*}
W\left( n,k\right) ={n+\left \vert \mathbf{r}_{p}\right \vert
\brace k+\left \vert \mathbf{r}_{p}\right \vert }_{\!\!K\left( \mathbf{r}_{p}\right) }.
\end{equation*}%
Then, the sequence $\left( W\left( n,k\right) ,\ 0\leq k\leq n\right) $ is a P\'{o}lya frequency sequence.
\end{corollary}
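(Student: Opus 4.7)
The plan is to realize that this statement is an immediate specialization of Proposition \ref{P1} to the host graph $H = K_{r_1,\ldots,r_p}$. The complete $p$-partite graph $K_{r_1,\ldots,r_p}$ has exactly $h := r_1 + \cdots + r_p = |\mathbf{r}_p|$ vertices, so $K_{n,\mathbf{r}_p} = O_n \cup K_{r_1,\ldots,r_p}$ is a graph of the form $O_n \cup H$ appearing in that proposition. Using the identification $\alpha_k(K_{n,\mathbf{r}_p}) = {n+|\mathbf{r}_p| \brace k}_{\!\!K(\mathbf{r}_p)}$ established just above the statement, I would first write
$$W(n,k) = {n+|\mathbf{r}_p| \brace k+|\mathbf{r}_p|}_{\!\!K(\mathbf{r}_p)} = \alpha_{k+h}(O_n \cup H),$$
which is precisely the sequence named $U(n,k)$ in Proposition \ref{P1} for this choice of $H$.

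Proposition \ref{P1} then asserts that $(U(n,k),\, 0 \leq k \leq n)$ is log-concave and a P\'olya-frequency sequence, so the PF conclusion follows at once. At the level of mechanism, the input is Theorem \ref{TT1} with $s=1$, which yields the triangular recurrence
$$W(n,k) = (k+h)\,W(n-1,k) + W(n-1,k-1),$$
whose coefficients $(a_1,a_2,a_3) = (0,1,h)$ and $(b_1,b_2,b_3) = (0,0,1)$ trivially satisfy the nonnegativity requirements as well as the two comparison inequalities $a_2 b_1 \geq a_1 b_2$ and $a_2(b_1+b_2+b_3) \geq (a_1+a_3) b_2$ (both reduce to $0 \geq 0$ and $1 \geq 0$) needed for the P\'olya-frequency criterion of \cite[Cor.~3]{wan}.

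I do not anticipate any genuine obstacle. The only point that requires a moment of care is the bookkeeping around the shift by $h = |\mathbf{r}_p|$ in the index, which is what aligns our $W(n,k)$ with the sequence $U(n,k)$ of Proposition \ref{P1} starting at $k = 0$; the vanishing condition $W(n,k) = 0$ for $k > n$ is then automatic from $\alpha_j(O_n \cup H) = 0$ for $j > n + h$.
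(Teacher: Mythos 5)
Your proposal is correct and matches the paper's (implicit) proof, which likewise just specializes Proposition \ref{P1} to $H=K_{r_{1},\ldots ,r_{p}}$ with $h=\left\vert \mathbf{r}_{p}\right\vert$ via the recurrence ${n \brace k}_{\!\!K\left( \mathbf{r}_{p}\right)}=k{n-1 \brace k}_{\!\!K\left( \mathbf{r}_{p}\right)}+{n-1 \brace k-1}_{\!\!K\left( \mathbf{r}_{p}\right)}$. You are also right on the one subtle point the paper obscures: since the graph is $O_{n}\cup K_{r_{1},\ldots ,r_{p}}$, the sequence is the $U(n,k)$ case of Proposition \ref{P1} (not its $W(n,k)$ case, which concerns $K_{n}\cup H$), so in fact the stronger conclusions of log-concavity and $q$-log-convexity hold as well, and the Pólya-frequency claim follows a fortiori.
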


\section{Application to the graph $T_{r_{_{1}}}\cup \cdots \cup T_{r_{_{p}}}\cup O_{n}$}

Let $p\geq 1$ be an integer. To give a third application of the chromatic
polynomials, we consider in this section the graph $T_{n,\mathbf{r}%
_{p}}=T_{r_{_{1}}}\cup \cdots \cup T_{r_{_{p}}}\cup O_{n}.$ Here%
\begin{equation*}
\chi \left( T_{n,\mathbf{r}_{p}}\right) =\max \left( \chi \left(
T_{r_{_{1}}}\right) ,\ldots ,\chi \left( T_{r_{_{p}}}\right) ,\chi \left(
O_{n}\right) \right) =\min \left( r_{1}\cdots r_{p},2\right) .
\end{equation*}%
Similarly to the above numbers, we present here a new class of the Stirling
numbers having also triangular recurrence relation. Then, we may start by
giving the following definition.

\begin{definition}
Let $R_{1},\ldots ,R_{p}$ be subsets of the set $\left[ n\right] $ with $%
\left \vert R_{i}\right \vert =r_{i}$ and $R_{i}\cap R_{j}=\varnothing $ for
all $i,j=1,\ldots ,p,$ $i\neq j.$ The $T\left( r_{1},\ldots ,r_{p}\right) $%
-Stirling number of the second kind, denoted by $
{n \brace k}_{\!\!T\left( r_{1},\ldots ,r_{p}\right) },$ counts the number of
partitions of the set $\left[ n\right] $ into $k$ non-empty subsets such
that the minimum element of $R_{i}$ can't be in the same subset with any
other element of $R_{i},$ $i=1,\ldots ,p.$
\end{definition}

\noindent From this definition, we may state the following:%
\begin{eqnarray*}
{n \brace k}_{\!\!T\left( r_{1},\ldots ,r_{p}\right) } &=&0,\text{ \  \  \
}n<r_{1}+\cdots +r_{p}\text{ or }k<\min \left( r_{1}\cdots r_{p},2\right) ,
\\
{n \brace k}_{\!\!T\left( r_{1},\ldots ,r_{p}\right) } &=&
{n \brace k}\text{ \ if }r_{1},\ldots ,r_{p}\in \left \{ 0,1\right \} , \\
{n \brace k}_{\!\!T\left( r_{1},\ldots ,r_{p}\right) } &=&
{n \brace k}_{\!\!K\left( r_{\sigma \left( 1\right) },\ldots ,r_{\sigma \left(
p\right) }\right) }\text{ for all permutations }\sigma \text{ on the set }%
\left \{ 1,\ldots ,p\right \} .
\end{eqnarray*}%
Furthermore, since the trees of same order have the same chromatic polynomial,
it is sufficient to apply such results on the star graphs. So, choice $T_{i}$
to be the star graph with $r_{i}$ vertices ($T_{i}=K_{1,r_{i}-1}$ ) with the universal vertex is the minimum element of $%
R_{i}$ (the set-vertex of $T_{i}$). Hence, the coefficient $\alpha _{k}\left(
T_{n,\mathbf{r}_{p}}\right) $ represents the number of ways of partitioning
the set of $n+\left \vert \mathbf{r}_{p}\right \vert $ vertices of $T_{n,\mathbf{r}_{p}}$ into $k$
independent sets, and by the definition of the graph $T_{n,\mathbf{r}_{p}},$
the universal vertex of $T_{r_{i}}$ ($i=1,\ldots ,p$) can't be in the same
independent set with any other vertex of $T_{r_{i}}$. So, this number is
exactly ${n+r_{1}+\cdots+r_{p} \brace k}_{\!\!T\left( r_{1},\ldots ,r_{p}\right) }.$ \\ Then, we may state that%
\begin{equation*}
\alpha _{k}\left( T_{n,\mathbf{r}_{p}}\right) ={n+r_{1}+\cdots
+r_{p} \brace k}_{\!\!T\left( r_{1},\ldots ,r_{p}\right) }:={n+\left
\vert \mathbf{r}_{p}\right \vert \brace k}_{\!\!T\left( \mathbf{r}_{p}\right) }.
\end{equation*}%

\begin{theorem}
For $1\leq r_{1}\leq r_{2}\leq \cdots \leq r_{p},$ the polynomial $%
z^{n+p}\left( z-1\right) ^{\left \vert \mathbf{r}_{p}\right \vert -p}$ can
be written in the basis $\left \{ \left( \lambda \right) _{k},\ k=0,1,\ldots,n+\left \vert \mathbf{r}_{p}\right \vert
\right \} $ as follows:%
\begin{equation*}
z^{n+p}\left( z-1\right) ^{\left \vert \mathbf{r}_{p}\right \vert
-p}=\sum_{k=0}^{n+\left \vert \mathbf{r}_{p}\right \vert }
{n+\left \vert \mathbf{r}_{p}\right \vert \brace k}_{\!\!T\left( \mathbf{r}%
_{p}\right) }\left( z\right) _{k},
\end{equation*}%
Furthermore, we have%
\begin{equation*}
{n+\left \vert \mathbf{r}_{p}\right \vert \brace k}_{\!\!T\left(
\mathbf{r}_{p}\right) }={n+\left \vert \mathbf{r}_{p}\right
\vert \brace k}_{\!\!T\left( \left \vert \mathbf{r}_{p}\right \vert -p+1\right) }.
\end{equation*}
\end{theorem}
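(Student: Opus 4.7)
The plan is to compute the chromatic polynomial $P(T_{n,\mathbf{r}_p},\lambda)$ in two ways and identify coefficients in the basis $\{(\lambda)_k\}$.

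First, I would use the product formula for disjoint unions recalled in the introduction ($P(G_1\cup G_2,\lambda)=P(G_1,\lambda)P(G_2,\lambda)$) together with the given evaluations $P(O_n,\lambda)=\lambda^n$ and $P(T_r,\lambda)=\lambda(\lambda-1)^{r-1}$ (for $r\geq 1$). This immediately yields
\[
P(T_{n,\mathbf{r}_p},\lambda)=\lambda^{n}\prod_{i=1}^{p}\lambda(\lambda-1)^{r_i-1}=\lambda^{n+p}(\lambda-1)^{|\mathbf{r}_p|-p}.
\]
On the other hand, from the identification $\alpha_k(T_{n,\mathbf{r}_p})={n+|\mathbf{r}_p| \brace k}_{T(\mathbf{r}_p)}$ established just before the theorem, the expansion
\[
P(T_{n,\mathbf{r}_p},\lambda)=\sum_{k=0}^{n+|\mathbf{r}_p|}{n+|\mathbf{r}_p| \brace k}_{\!\!T(\mathbf{r}_p)}(\lambda)_k
\]
holds by the general representation $P(G,\lambda)=\sum_i\alpha_i(G)(\lambda)_i$ recalled in the introduction. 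Equating the two expressions gives the first identity.

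For the second identity, the key observation is that the polynomial $\lambda^{n+p}(\lambda-1)^{|\mathbf{r}_p|-p}$ depends on $\mathbf{r}_p$ only through the two quantities $n+p$ and $|\mathbf{r}_p|-p$. Thus, setting $p'=1$, $r'_1=|\mathbf{r}_p|-p+1$ and $n'=n+p-1$, the single tree $T_{r'_1}$ has $|\mathbf{r}_p|-p+1$ vertices and
\[
P(T_{n',r'_1}\cup O_{n'}\text{ shape},\lambda)=\lambda^{n'}\cdot \lambda(\lambda-1)^{r'_1-1}=\lambda^{n+p}(\lambda-1)^{|\mathbf{r}_p|-p},
\]
which coincides with $P(T_{n,\mathbf{r}_p},\lambda)$. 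Note that $n'+r'_1=n+|\mathbf{r}_p|$, so both expansions live in the same basis $\{(\lambda)_k\}_{k=0}^{n+|\mathbf{r}_p|}$; uniqueness of the expansion in this basis forces equality of the coefficients, which is precisely the claimed identity ${n+|\mathbf{r}_p| \brace k}_{T(\mathbf{r}_p)}={n+|\mathbf{r}_p| \brace k}_{T(|\mathbf{r}_p|-p+1)}$.

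There is no real obstacle here: the argument is essentially a two-line computation of the chromatic polynomial followed by a change of viewpoint. The only mild subtlety is bookkeeping on degrees to make sure the two expansions really do sit in the same polynomial basis, which is handled by the relation $n'+r'_1=n+|\mathbf{r}_p|$ above.
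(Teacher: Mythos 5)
Your proposal is correct and follows essentially the same route as the paper: compute $P(T_{n,\mathbf{r}_p},\lambda)=\lambda^{n+p}(\lambda-1)^{|\mathbf{r}_p|-p}$ via the disjoint-union product formula, identify the coefficients in the falling-factorial basis with the $T(\mathbf{r}_p)$-Stirling numbers, and then obtain the second identity by specializing to a single tree on $|\mathbf{r}_p|-p+1$ vertices and invoking uniqueness of the expansion. Your explicit bookkeeping $n'+r_1'=n+|\mathbf{r}_p|$ is exactly the substitution the paper performs implicitly in its "in particular" step.
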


\begin{proof}
From \cite[Sec. 1.2]{don}, the chromatic polynomial of the graph $T_{n,%
\mathbf{r}_{p}}$ is
\begin{eqnarray*}
P\left( T_{n,\mathbf{r}_{p}},z\right) &=&P\left( O_{n},z\right) P\left(
T_{r_{1}},z\right) \cdots P\left( T_{r_{p}},z\right) \\&=&\left( z\right)
^{n+p}\left( z-1\right) ^{\left \vert \mathbf{r}_{p}\right \vert
-p}\\&=&\sum_{i=0}^{n+\left \vert \mathbf{r}_{p}\right \vert }
{n+\left \vert \mathbf{r}_{p}\right \vert \brace i}_{\!\!T\left( \mathbf{r}%
_{p}\right) }\left( z\right) _{i}.
\end{eqnarray*}%
This gives the first result. In particular, we have
\begin{equation*}
z^{n+1}\left(z-1\right) ^{r-1}=\sum_{k=0}^{n}{n+r \brace k}_{\!\!T\left( r\right)
}\left( z\right) _{k},
\end{equation*}%
and by combining this result with the first one we obtain the second result.
\end{proof}

\noindent For the choice $H:=T_{r_{_{1}}}\cup \cdots \cup
T_{r_{_{p}}}$ in Theorem \ref{TT1} we get:

\begin{corollary}
Let $n,\ k$ be integers such that $2\leq k\leq n$ and $n\geq \left \vert
\mathbf{r}_{p}\right \vert .$ Then, for $1\leq r_{1}\leq \cdots
\leq r_{p},$ we have%
\begin{equation*}
{n \brace k}_{\!\!T\left( \mathbf{r}_{p}\right)
}=\sum_{j=2}^{k}{s+j \brace k}_{\!\!j}
{n-s \brace j}_{\!\!T\left( \mathbf{r}_{p}\right) },\  \ 0\leq s\leq n-\left \vert
\mathbf{r}_{p}\right \vert .
\end{equation*}%
In particular, for $s=1,$ these numbers obey to the following triangular
recurrence relation:%
\begin{equation*}
{n \brace k}_{\!\!T\left( \mathbf{r}_{p}\right) }=
{n-1 \brace k-1}_{\!\!T\left( \mathbf{r}_{p}\right) }+k{n-1 \brace k}_{\!\!T\left(
\mathbf{r}_{p}\right) },
\end{equation*}%
and for $s=n-\left \vert \mathbf{r}_{p}\right \vert $ we get%
\begin{equation*}
{n \brace k}_{\!\!T\left( \mathbf{r}_{p}\right)
}=\sum_{j=2}^{k}{n-\left \vert \mathbf{r}_{p}\right \vert
+j \brace k}_{\!\!j}{\left \vert \mathbf{r}_{p}\right \vert
\brace j}_{\!\!T\left( \mathbf{r}_{p}\right) }.
\end{equation*}
\end{corollary}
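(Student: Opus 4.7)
The plan is to obtain the corollary by direct substitution into Theorem \ref{TT1}, taking $H := T_{r_{1}} \cup \cdots \cup T_{r_{p}}$. Writing $m = n - |\mathbf{r}_{p}|$, the graph $T_{m,\mathbf{r}_{p}} = O_{m} \cup T_{r_{1}} \cup \cdots \cup T_{r_{p}}$ has order $n$, and the identification established immediately before the statement gives
\begin{equation*}
\alpha_{k}(T_{m,\mathbf{r}_{p}}) = {n \brace k}_{\!\!T(\mathbf{r}_{p})}, \qquad \alpha_{j}(T_{m-s,\mathbf{r}_{p}}) = {n-s \brace j}_{\!\!T(\mathbf{r}_{p})}.
\end{equation*}

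Next I would verify that $\chi(H) = \max_{i} \chi(T_{r_{i}}) = 2$ under the hypothesis $1 \le r_{1} \le \cdots \le r_{p}$ in the interesting case where some $r_{i} \ge 2$; the degenerate case $r_{1} = \cdots = r_{p} = 1$ reduces $H$ to isolated vertices and is absorbed by the ordinary Stirling recurrences (for $k \ge 2$ the missing $j = 1$ term contributes zero). Plugging into Theorem \ref{TT1} with this $H$ then yields, for $0 \le s \le m$,
\begin{equation*}
{n \brace k}_{\!\!T(\mathbf{r}_{p})} = \sum_{j=2}^{k} {s+j \brace k}_{\!\!j} {n-s \brace j}_{\!\!T(\mathbf{r}_{p})},
\end{equation*}
which is exactly the general identity of the corollary.

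Finally I would specialize $s$. For $s = 1$, using that ${j+1 \brace k}_{\!\!j} = 0$ unless $k \in \{j, j+1\}$, together with ${k \brace k}_{\!\!k-1} = 1$ and ${k+1 \brace k}_{\!\!k} = k$ (the latter by counting the placements of the $(k+1)$-st element in partitions of $[k+1]$ into $k$ blocks that keep the first $k$ elements in distinct blocks), the sum collapses to the triangular recurrence
\begin{equation*}
{n \brace k}_{\!\!T(\mathbf{r}_{p})} = {n-1 \brace k-1}_{\!\!T(\mathbf{r}_{p})} + k {n-1 \brace k}_{\!\!T(\mathbf{r}_{p})}.
\end{equation*}
For $s = n - |\mathbf{r}_{p}|$, so that $n - s = |\mathbf{r}_{p}|$, the substitution is literal. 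The only nontrivial step is the bookkeeping around $\chi(H)$ to justify starting the outer sum at $j = 2$; once that is handled, everything else is a mechanical specialization of Theorem \ref{TT1}.
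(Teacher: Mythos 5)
Your proposal is exactly the paper's argument: the paper gives no proof at all for this corollary, merely the instruction ``set $H:=T_{r_{1}}\cup\cdots\cup T_{r_{p}}$ in Theorem~\ref{TT1}'', and your substitution $m=n-\left\vert\mathbf{r}_{p}\right\vert$, the identification $\alpha_{k}(T_{m,\mathbf{r}_{p}})={n\brace k}_{\!\!T(\mathbf{r}_{p})}$, and the collapse of the $s=1$ sum via ${k\brace k}_{\!\!k-1}=1$ and ${k+1\brace k}_{\!\!k}=k$ fill in precisely the intended details. One caveat: your parenthetical claim that in the degenerate case $r_{1}=\cdots=r_{p}=1$ the missing $j=1$ term vanishes for $k\geq 2$ is false --- that term is ${s+1\brace k}_{\!\!1}{n-s\brace 1}_{\!\!T(\mathbf{r}_{p})}$, which for instance with $p=r_{1}=1$, $n=3$, $k=2$, $s=1$ equals $1$, and indeed the stated identity then reads $3={3\brace 2}\neq{3\brace 2}_{\!\!2}{2\brace 2}=2$. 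This is a defect of the corollary as stated (its hypothesis $1\leq r_{1}\leq\cdots\leq r_{p}$ admits the all-ones case, where $\chi(H)=1$ rather than $2$), so it does not reflect on your substitution argument in the main case where some $r_{i}\geq 2$; still, the honest fix is to either exclude that case or start the sum at $j=\chi(H)$.
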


\noindent For $i=1,\ldots ,p,$ set $\ s=1$ and $T_{r_{i}}\cup H:=T_{r_{_{1}}}\cup
\cdots \cup T_{r_{_{p}}}\cup O_{n}\ $ in Theorem \ref{TT3} to obtain:

\begin{corollary}
Let $n,\ k$ be integers such that $1\leq k\leq n$ and $n\geq \left \vert
\mathbf{r}_{p}\right \vert .$ Then, for a fixed $i$ \ $(i=1,\ldots ,p)$ with $r_{i}\geq 1,$  we have%
\begin{equation*}
{n+\left \vert \mathbf{r}_{p}\right \vert \brace k}_{\!\!T\left(
\mathbf{r}_{p}\right) } =\left( k-1\right){n+\left \vert \mathbf{r}_{p}\right \vert \brace k}_{\!\!T\left(
\mathbf{r}_{p}-\mathbf{e}_{i}\right) }+{n+\left \vert \mathbf{r}_{p}\right \vert \brace k-1}_{\!\!T\left(
\mathbf{r}_{p}-\mathbf{e}_{i}\right) } ,\  \ n\geq 1.
\end{equation*}
\end{corollary}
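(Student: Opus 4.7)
The plan is to apply Theorem~\ref{TT3} in its $s=1$ specialization, but with a careful reinterpretation of which graph plays the role of $T_n$. Rather than peeling a vertex off the $O_n$ part of $T_{n,\mathbf{r}_p}$, I would take the ``tree component'' of Theorem~\ref{TT3} to be the tree $T_{r_i}$ itself. Concretely, I set $T_n := T_{r_i}$ and
\begin{equation*}
H := T_{r_1}\cup \cdots \cup T_{r_{i-1}}\cup T_{r_{i+1}}\cup \cdots \cup T_{r_p}\cup O_n,
\end{equation*}
so that $T_{r_i}\cup H$ is exactly the graph $T_{n,\mathbf{r}_p}$ and $T_{r_i-1}\cup H$ is exactly the graph $T_{n,\mathbf{r}_p-\mathbf{e}_i}$ (the hypothesis $r_i\geq 1$ is precisely what guarantees that $T_{r_i-1}$ makes sense, allowing Theorem~\ref{TT3} to fire).

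The $s=1$ recurrence of Theorem~\ref{TT3} applied to this choice reads
\begin{equation*}
\alpha_{k}(T_{r_i}\cup H)=(k-1)\,\alpha_{k}(T_{r_i-1}\cup H)+\alpha_{k-1}(T_{r_i-1}\cup H),
\end{equation*}
which, by the two identifications above, is the same as
\begin{equation*}
\alpha_{k}(T_{n,\mathbf{r}_p})=(k-1)\,\alpha_{k}(T_{n,\mathbf{r}_p-\mathbf{e}_i})+\alpha_{k-1}(T_{n,\mathbf{r}_p-\mathbf{e}_i}).
\end{equation*}
It then remains to translate the $\alpha$-coefficients into restricted Stirling numbers via the identity $\alpha_{k}(T_{n,\mathbf{r}_p})={n+|\mathbf{r}_p| \brace k}_{\!\!T(\mathbf{r}_p)}$ already established earlier in this section. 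Substituting on both sides of the above display yields the asserted recurrence.

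The only real obstacle is a notational one: the symbol $n$ plays two different roles and must not be conflated. In Theorem~\ref{TT3}, the letter $n$ counts the vertices of the tree being decremented, whereas in $T_{n,\mathbf{r}_p}$ it indexes the $O_n$ component and stays fixed throughout. Once this distinction is kept straight, the proof is pure bookkeeping; one should also quickly check that $k\geq \chi(T_{n,\mathbf{r}_p})=\min(r_1\cdots r_p,2)$ and $k\leq n+|\mathbf{r}_p|$ hold in the stated range, so that all Stirling symbols appearing are in their valid domain and the application of Theorem~\ref{TT3} is legitimate.
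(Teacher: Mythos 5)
Your approach is exactly the paper's: the sentence introducing this corollary reads ``set $s=1$ and $T_{r_{i}}\cup H:=T_{r_{1}}\cup\cdots\cup T_{r_{p}}\cup O_{n}$ in Theorem \ref{TT3}'', which is precisely the substitution you describe. However, the step you dismiss as ``pure bookkeeping'' is where the argument fails to close. The graph $T_{r_{i}-1}\cup H$ has $n+\left\vert\mathbf{r}_{p}\right\vert-1$ vertices, one fewer than $T_{n,\mathbf{r}_{p}}$, so the translation rule $\alpha_{k}\left(T_{m,\mathbf{s}}\right)={m+\left\vert\mathbf{s}\right\vert\brace k}_{\!\!T\left(\mathbf{s}\right)}$ gives $\alpha_{k}\left(T_{r_{i}-1}\cup H\right)={n+\left\vert\mathbf{r}_{p}\right\vert-1\brace k}_{\!\!T\left(\mathbf{r}_{p}-\mathbf{e}_{i}\right)}$, not ${n+\left\vert\mathbf{r}_{p}\right\vert\brace k}_{\!\!T\left(\mathbf{r}_{p}-\mathbf{e}_{i}\right)}$ as you assert. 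What the substitution actually yields is
\begin{equation*}
{n+\left\vert\mathbf{r}_{p}\right\vert\brace k}_{\!\!T\left(\mathbf{r}_{p}\right)}=\left(k-1\right){n+\left\vert\mathbf{r}_{p}\right\vert-1\brace k}_{\!\!T\left(\mathbf{r}_{p}-\mathbf{e}_{i}\right)}+{n+\left\vert\mathbf{r}_{p}\right\vert-1\brace k-1}_{\!\!T\left(\mathbf{r}_{p}-\mathbf{e}_{i}\right)},
\end{equation*}
with the upper index decreased by one on the right.

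This is not something more careful bookkeeping could repair: the identity as printed is false, and the decremented version is the correct one. Take $p=1$, $r_{1}=2$, $n=1$, $k=2$. Then ${3\brace 2}_{\!\!T\left(2\right)}=2$ (partitions of $\left\{1,2,3\right\}$ into two blocks separating $1$ from $2$), while the stated right-hand side is $\left(2-1\right){3\brace 2}_{\!\!T\left(1\right)}+{3\brace 1}_{\!\!T\left(1\right)}={3\brace 2}+{3\brace 1}=4$; the decremented right-hand side gives $\left(2-1\right){2\brace 2}+{2\brace 1}=2$ as required. Equivalently, in generating-function terms for $p=1$: the left side corresponds to $z^{n+1}\left(z-1\right)^{r-1}$, whereas multiplying $\sum_{k}{n+r\brace k}_{\!\!T\left(r-1\right)}\left(z\right)_{k}=z^{n+2}\left(z-1\right)^{r-2}$ by $\left(z-1\right)$ produces $z^{n+2}\left(z-1\right)^{r-1}$, off by a factor of $z$. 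So you have faithfully reconstructed the paper's intended argument, but your claim that the substitution ``yields the asserted recurrence'' is exactly the gap: it yields the corrected identity above, and you should either prove that version or flag the off-by-one in the statement.
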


\begin{remark}
For $1\leq r_{1}\leq \cdots \leq r_{p},$ then similarly to Remark %
\ref{R1}, we get%
\begin{equation*}
{n+\left \vert \mathbf{r}_{p}\right \vert \brace k}_{\!\!T\left(
\mathbf{r}_{p}\right) }=\frac{1}{k!}\sum_{j=0}^{k}\left( -1\right) ^{k-j}%
\binom{k}{j}j^{n+p}\left( j-1\right)
^{\left \vert \mathbf{r}_{p}\right \vert -p},
\end{equation*}%
and since $\chi \left( T_{n,\mathbf{r}_{p}}\right) =2,$ when $r_{1}\cdots r_{p}>1$, then the two initial values of these numbers are given by
\begin{equation*}
{n+\left \vert \mathbf{r}_{p}\right \vert \brace 2}_{\!\!T\left(
\mathbf{r}_{p}\right) }=2 ^{n+p-1}, \ \ {n+\left \vert \mathbf{r}_{p}\right \vert \brace 3}_{\!\!T\left(
\mathbf{r}_{p}\right) }=3 ^{n+p-1}\times2^{\left \vert \mathbf{r}_{p}\right \vert -p-1}-2 ^{n+p-1},
\end{equation*}
and for $r_{1}=\cdots =r_{p}=1$ we get
\begin{equation*}
{n \brace k}_{\!\!T\left(\mathbf{r}_{p}\right) }={n \brace k}.
\end{equation*}%
\end{remark}
\noindent Similarly to Corollary \ref{C3}, we have:
\begin{corollary}
For $0\leq k\leq n$ let
\begin{equation*}
V\left( n,k\right) ={n+\left \vert \mathbf{r}_{p}\right \vert
\brace k+\left \vert \mathbf{r}_{p}\right \vert }_{\!\!T\left( \mathbf{r}_{p}\right)
}.
\end{equation*}%
Then, the sequence $\left( V\left( n,k\right) ,\ 0\leq k\leq n\right) $ is
log-concave and P\'{o}lya frequency sequence, and, the sequence of
polynomials $\left( V_{n}\left( q\right) ,n\geq 0\right) $ defined by
\begin{equation*}
V_{n}\left( q\right) =\overset{n}{\underset{k=0}{\sum }}{n+\left \vert \mathbf{r}_{p}\right \vert
\brace k+\left \vert \mathbf{r}_{p}\right \vert }_{\!\!T\left( \mathbf{r}_{p}\right)
}q^{k}
\end{equation*}%
is a $q$-log-convex sequence.
\end{corollary}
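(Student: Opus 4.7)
The plan is to obtain this corollary as a direct application of Proposition \ref{P1}, in exactly the same spirit as Corollary \ref{C3} was obtained from the $U$-part of that proposition. The only real work is to select the auxiliary graph $H$ so that the $V$-sequence of Proposition \ref{P1} becomes the one stated here.

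Concretely, I will take $H := T_{r_{1}}\cup\cdots\cup T_{r_{p}}$. Then $h=|\mathbf{r}_{p}|$ and $T_{n}\cup H = T_{n,\mathbf{r}_{p}}$, so that the identification
\[
\alpha_{k+h}(T_{n}\cup H) \;=\; \alpha_{k+|\mathbf{r}_{p}|}(T_{n,\mathbf{r}_{p}}) \;=\; {n+|\mathbf{r}_{p}| \brace k+|\mathbf{r}_{p}|}_{\!\!T(\mathbf{r}_{p})}
\]
established at the beginning of this section shows that the sequence $V(n,k)$ and the polynomial $V_{n}(q)$ in the corollary coincide with those of Proposition \ref{P1}. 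The three conclusions of that proposition, namely log-concavity of $(V(n,k))_{0\le k\le n}$, the Pólya-frequency property of the same sequence, and $q$-log-convexity of $(V_{n}(q))_{n\ge 0}$, therefore transfer verbatim to the present statement.

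Since the argument is essentially a relabelling, there is no substantive obstacle. The one point worth flagging is that the hypotheses imported into Proposition \ref{P1} from \cite[Thm.~2]{kur}, \cite[Cor.~3]{wan} and \cite[Thm.~4.1]{liu}, applied to the recurrence
\[
V(n,k) \;=\; V(n-1,k-1) + (k+h-1)\,V(n-1,k)
\]
supplied by Theorem \ref{TT3}, must remain valid here. In our normalization this reduces to the triple $(a_{1},a_{2},a_{3},b_{1},b_{2},b_{3})=(0,1,h-1,0,0,1)$, and every required inequality then collapses to the harmless condition $h=|\mathbf{r}_{p}|\ge 1$, which holds since each $r_{i}\ge 1$. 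Hence Proposition \ref{P1} applies, and the corollary follows.
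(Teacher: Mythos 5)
Your proposal is correct and follows exactly the route the paper intends: the paper introduces this corollary with ``Similarly to Corollary \ref{C3}'' and relies on Proposition \ref{P1} with $H:=T_{r_{1}}\cup\cdots\cup T_{r_{p}}$, together with the identification $\alpha_{k}\left(T_{n,\mathbf{r}_{p}}\right)={n+\left\vert\mathbf{r}_{p}\right\vert\brace k}_{\!\!T\left(\mathbf{r}_{p}\right)}$ established earlier in that section. Your explicit verification of the Kurtz/Wang--Yeh/Liu--Wang hypotheses for the recurrence $V(n,k)=V(n-1,k-1)+(k+h-1)V(n-1,k)$, reducing to $h=\left\vert\mathbf{r}_{p}\right\vert\geq 1$, is a welcome extra check but does not change the argument.
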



\begin{thebibliography}{9}

\bibitem{bir1} G. D. Birkhoff, A determinant formula for the number of ways
of colouring a map. \textit{Ann. Math.,} 14(2):42--46, 1912.

\bibitem{bir2} G. D. Birkhoff and D.C. Lewis, Chromatic polynomials. \textit{%
Trans. Amer. Math. Soc.} 60:355--451, 1946.

\bibitem{bro} A. Z. Broder, The $r$-Stirling numbers. \textit{Discrete Math.}
49: 241--259, 1984.

\bibitem{don} F. M. Dong, K. M. Koh and K. L. Teo, \textit{Chromatic
Polynomials and Chromaticity of Graphs}. World Scientic, 2005.

\bibitem{kar} S. Karlin, \textit{Total Positivity, vol.I}. Stanford
University Press, Stanford, 1968.

\bibitem{kur} D. C. Kurtz, A note on concavity properties of triangular
arrays of numbers. \textit{J. Combin. Theory Ser. A} 13:135--139, 1972.

\bibitem{har} G. H. Hardy, J. E. Littlewood and G. Pl\'{o}ya, \textit{%
Inequalities }(Cambridge: The University Press), 1952.

\bibitem{liu} L. L. Liu and Y. Wang, On the log-convexity of combinatorial
sequences. \textit{Adv. in Appl. Math.} 39:453--476, 2007.

\bibitem{maa} M. S. Maamra and M. Mihoubi, The $\left( r_{1},\ldots
,r_{p}\right) $-Bell polynomials. Preprint avalaible at
http://arxiv.org/abs/1212.3191v1.

\bibitem{mih} M. Mihoubi and M. S. Maamra, The $\left( r_{1},\ldots
,r_{p}\right) $-Stirling numbers of the second kind. \textit{Integers} 12:
Article A35, 2012.

\bibitem{rea} R. C. Read, An introduction to chromatic polynomials. \textit{%
J. of combinatorial theory} 4:52-71, 1968.

\bibitem{sag1} B. E. Sagan, Log concave sequences of symmetric functions and
analogs of the Jacobi--Trudi determinants.\textit{\ Trans. Amer. Math. Soc.}
329:795--811, 1992.

\bibitem{sag2} B. E. Sagan, Inductive proofs of $q$-log concavity. \textit{%
Discrete Math.} 99:298--306, 1992.

\bibitem{tom} Tomescu, \textit{Problems in combinatorics and graph theory. }%
Translated from the Romanian by Robert A. Milter. Wily Interscience Series
in Discrete Mathematics. A Wily Interscience Publication. John Wily \& Sons,
Ltd., Chichester, 1985.

\bibitem{wan} Y. Wang and Y-N. Yeh, Polynomials with real zeros and P\'{o}lya
frequency sequences. \textit{J. Combin. Theory Ser. A} 109:63--74, 2005.

\bibitem{whit} E. G. Whitehead, Stirling number identities from chromatic
polynomials. \textit{J. Combin. Theory Ser. A} 24:314--317, 1978.

\bibitem{whi1} H. Whitney, A logical expansion mathematics. \textit{Bull.
amer. Math. Soc.} 38:572-579, 1932.

\bibitem{whi2} H. Whitney, On the colouring of graphs. \textit{Ann. Math.
(12)} 33:688-718, 1933.

\bibitem{zho} F. Z. Zhao, On log-concavity of a class of generalized Stirling numbers. \textit{Electron. J. Comb.} 19(2):P11, 2012.

\bibitem{zou} H. W. Zou, The chromatic uniqueness of certain complete $t$%
-partite graphs. \textit{Discrete Math. }275:375--383, 2004.
\end{thebibliography}
\end{document}